\documentclass[leqno,11pt]{amsart}

\usepackage{amssymb}
\usepackage{amsmath}
\usepackage{enumerate}
\usepackage[pdftex]{graphicx}
\usepackage{graphicx, color}
\usepackage[inner=3cm,outer=3cm,top=3cm,bottom=3cm]{geometry}

\usepackage{multirow}

\usepackage{caption}
\usepackage{subcaption}
\usepackage{float}
\usepackage{relsize}
\usepackage{enumitem}
\usepackage{xcolor}
\usepackage{cancel}
\usepackage{stmaryrd}
\usepackage{float}  % Para usar [H]
\usepackage{fancyhdr}  % Para personalizar encabezados
\usepackage{lmodern}
\usepackage{mathrsfs}
\usepackage{mathtools}

\def\s{\mathbb{S}}

\def\R{\mathbb{R}}

\def\C{\mathbb{C}}
\def\L{\mathbb{L}}
\def\E{\mathbb{E}}

\def\K{\mathcal{K}}

\DeclareMathOperator{\arctanh}{arctanh} 

\DeclareMathOperator{\sech}{sech}

\newtheorem{theorem}{Theorem}[section]
\newtheorem{proposition}[theorem]{Proposition}
\newtheorem{corollary}[theorem]{Corollary}

\theoremstyle{definition}
\newtheorem{definition}[theorem]{Definition}
\newtheorem{remark}[theorem]{Remark}
\newtheorem{example}[theorem]{Example}

\numberwithin{equation}{section}

\begin{document}

\title[Quadric surfaces of revolution in the 3-sphere]{Quadric surfaces of revolution in the 3-sphere \\ as cubic Weingarten surfaces}

\author[I. Castro]{Ildefonso Castro}
\address{Departamento de Matem\'{a}ticas \\
	Universidad de Ja\'{e}n \\
	23071 Ja\'{e}n, Spain and IMAG, Instituto de Matem\'aticas de la Universidad de Granada, 18071 Granada, Spain.}
\email{icastro@ujaen.es}

\author[D. L\'opez-L\'opez]{Daniel L\'opez-L\'opez}
\address{Departamento de Matem\'aticas \\
	Universidad de Ja\'{e}n \\
	23071 Ja\'{e}n, Spain.}
\email{dll00018@red.ujaen.es}

\subjclass[2010]{Primary 53A04, 53A05}

\keywords{Rotational surfaces, Weingarten surfaces, 3-dimensional sphere, spherical conics, spherical quadric surfaces of revolution.}

\date{}

\begin{abstract}
The study of quadric surfaces of revolution is a cornerstone of classical Euclidean geometry, but its extension to the three-dimensional sphere $\mathbb{S}^3$ has not been sufficiently explored. This article addresses this important gap by providing a rigorous classification and characterization of non-degenerate quadric surfaces of revolution in $\mathbb{S}^3$, namely spherical ellipsoids, hyperboloids and paraboloids, generated by the rotation of spherical conics around a geodesic axis containing their foci or is orthogonal to them.

Using the concept of spherical angular momentum as a prominent geometric invariant, we discover that these surfaces constitute a remarkable class of Weingarten surfaces and prove that they are uniquely characterised by a specific cubic functional relation between their principal curvatures. This result not only provides a unified description of spherical quadric surfaces of revolution, but also highlights a profound geometric universality, reflecting exactly the same cubic Weingarten relations observed in their Euclidean and Lorentzian counterparts.
\end{abstract}

\maketitle

%\tableofcontents

%\newpage

\section{Introduction}\label{Sect:Intro}

The study of quadric surfaces constitutes a classical and fundamental topic in the geometry of Euclidean space $\mathbb E^3$. Among these, quadric surfaces of revolution stand out for their geometric simplicity and symmetry, being naturally generated by the rotation of a conic section around one of its principal axes. This constructive notion, deeply rooted in the Euclidean tradition, admits a natural extension to the three-dimensional sphere $\mathbb{S}^3$. Surprisingly, to the best of our knowledge, this subject has not been addressed in the literature. In this new spherical context, the analysis of such surfaces not only enriches the classical theory but also connects with interesting questions in modern differential geometry. A paradigmatic example of this relevance is the well-known problem posed by S.T.\ Yau  in 1982 (cf.\ \cite{Y82}) regarding a characterization of the ellipsoid of revolution, which inquires whether every closed surface in $\mathbb{E}^3$ satisfying a specific cubic relation between their principal curvatures is necessarily a rotational ellipsoid; see \cite{CC23} and references therein for a detailed explanation of the problem and the different approaches to its solution and generalizations. Simply posing the same questions in $\mathbb{S}^3$ already underscores the necessity of a rigorous and systematic understanding of quadric surfaces of revolution within the spherical framework.

While the generating curves of these surfaces ---the so-called \textit{spherical conics}--- have been the subject of extensive study in classical literature, dating back to the foundational works of Chasles \cite{Cha60} in the 19th century, the theory of the surfaces they generate in $\mathbb{S}^3$ appears surprisingly sparse. Although spherical conics are well-understood as the intersection of the sphere with quadratic cones, or via their focal properties, their role as profile curves for rotational surfaces in $\mathbb{S}^3$ has not received a comparable level of attention. This article addresses this gap, proposing a comprehensive study of quadric surfaces of revolution in the 3-sphere, treating them as rotational surfaces generated by spherical conics rotating around a geodesic axis.
Therefore, the geometric setting for this investigation relies on the description of rotational surfaces in $\mathbb{S}^3$ as objects generated by spherical curves in the spirit of \cite{CCIs24}, \cite{dCD83} or \cite{Ri89} (see Section \ref{Sect:RotS3}). By identifying $\mathbb{S}^3$ with the unit sphere in $\mathbb{R}^4$, we consider a profile curve $\xi$ contained in a totally geodesic 2-sphere of $\s^3$. The rotation of $\xi$ around a fixed great circle $\xi_0$ generates a surface $S_\xi$ in $\mathbb{S}^3$, whose geometry is intrinsically linked to the properties of the generating curve $\xi$. As expected, this approach allows us to translate problems of surface theory in $\mathbb{S}^3$ into the analysis of curves in $\mathbb{S}^2$, providing a powerful reduction of dimension. Recently, this simple idea proved quite fruitful in \cite{CCIs24} in achieving a clear classification of the minimal rotational surfaces of $\mathbb{S}^3$. On the other hand, Brendle showed in \cite{Br13c} that any minimal torus which is immersed in the sense of Alexandrov must be rotationally symmetric.

A crucial ingredient in our analysis is the concept of \textit{spherical angular momentum} (see Section \ref{Sect:Momentum}). As developed in \cite{CCIs23} and \cite{CCIs24}, this geometric invariant plays a decisive role in the classification of spherical curves since, when it is expressed as a function of the non-constant distance from the curve to a fixed great circle $\xi_0$, it captures the relative position of the curve with respect to $\xi_0$ (Theorem \ref{Th:K determines}). Thus it is the natural tool for uniquely determining rotational surfaces with $\xi_0$  as an axis of revolution (Corollary \ref{cor:Kkey}). Consequently, the geometry of the rotational surface $S_\xi$ ---including its principal curvatures--- can be completely encoded in terms of the spherical angular momentum (with respect to $\xi_0$) of its profile curve $\xi$ (Corollary \ref{cor:edoW}). 

The main objective of this paper is to formally introduce and characterize \textit{non-degenerate quadric surfaces of revolution} in $\mathbb{S}^3$. We define these surfaces in Definition \ref{def:quadric} by rotating non-degenerate spherical conics—specifically, spherical ellipses, hyperbolas, and parabolas—around a geodesic containing their foci or orthogonal to them.  We derive explicit parametrizations and implicit equations for the resulting surfaces, consistently identifying families of spherical (prolate and oblate) ellipsoids, hyperboloids, and paraboloids of revolution. The comprehensive systematic study carried out on spherical conics in Section \ref{Sect:Conics}, not only as geometric loci (Section \ref{Sect:ConicsLoci}) but also,  based on this, as the intersection of the sphere with elliptical cylinders (Section \ref{Sect:ConicsIntersection}), allows us to characterize them (and the quadric surfaces they generate) in terms of their corresponding spherical angular momentum.

As an application, the central result of this paper is the discovery that all such non-degenerate quadric surfaces of revolution in $\mathbb{S}^3$ belong to an interesting class of Weingarten surfaces. There do not appear to be many results on Weingarten surfaces in $\s^3$ (see e.g.\ \cite{Br14}, \cite{LYZ22}). Specifically, we prove in Theorem \ref{Th:Main} that ---apart from those that can be considered degenerate quadric surfaces of revolution--- the non-degenerate ones are the only ones satisfying the remarkable cubic Weingarten relation $k_m = \mu \, k_p^3$ between their principal curvatures $k_m$ (along meridians) and $k_p$ (along parallels), where $\mu$ is a real constant. 
This relation not only provides a unified characterization for the entire family of quadric surfaces of revolution but also mirrors the behavior observed in \cite{CC22} and \cite{CCC26} in their Euclidean and Lorentzian counterparts, revealing a deep universality in the geometry of rotational quadric surfaces across different space forms. 

Finally, we would like to highlight that stereographic projections on $\E^3$ of non-degenerate (resp.\ degenerate) quadric surfaces of revolution in $\s^3$ are Darboux (resp.\ Dupin) cyclides, see Remark \ref{rm:Darboux}. The geometric modeling community is interested in using Darboux cyclides' circular arc structure in modern free form architecture (see e.g.\ \cite{P12}).

\section{Rotational surfaces in the 3-sphere generated by spherical curves} \label{Sect:RotS3}

\subsection{Spherical curves}\label{Sect:CurvesS2}

Let $\s^2 = \{ (x,y,z)\in \R^3 : x^2+y^2+z^2=1 \}$ be the unit 2-sphere in $\R^3$ and
consider  $\xi=(x,y,z): I\subseteq \R \rightarrow \s^2 $ a spherical smooth curve parameterized by the arc length,
i.e.\ $|\xi (s)|=|\dot \xi (s)|=1, \, \forall s\in I$, where $I$ is some interval in $\R$. 
We denote by a dot $\dot \,$ the derivative with respect to $s$ and by $\langle
\cdot, \cdot \rangle $ and $\times$ the Euclidean inner product and the cross product in $\R^3$ respectively. 

Let $T=\dot \xi $ be the unit tangent vector and $N=\xi \times \dot
\xi$ the unit normal vector of $\xi$. If $\nabla$ is the connection in $\s^2$, the oriented geodesic curvature $\kappa$ of $\xi$
is given by the Frenet equation $\nabla_T T = \kappa N$. Hence, we have that
$\ddot \xi =-\xi + \kappa N$, $ \dot N=-\kappa \,  \dot \xi$,
and so $\kappa = \det (\xi, \dot \xi, \ddot \xi)$.

If we use geographical coordinates $(\lambda, \varphi)$ on $\s^2$, $-\pi < \lambda \leq \pi$, $-\pi/2 \leq \varphi \leq \pi/2$, we can write
$\xi(s)=(\cos \varphi (s)  \cos \lambda (s) , \cos \varphi (s)  \sin \lambda (s) , \sin \varphi (s))$, $s \in I$.

\begin{example}\label{ex:parallel}
	Given $\varphi_0 \in (-\pi/2,\pi/2)$, let $\xi_{\varphi_0} $ be the \textit{parallel} of latitude $\varphi_0$ in $\s^2$  given by $z=\sin \varphi_0$. Its arc length parametrization is given by 
	 $\xi_{\varphi_0}(s) = \left(c_{\varphi_0}\cos \left(s/c_{\varphi_0}\right), c_{\varphi_0}\sin \left(s/c_{\varphi_0}\right), s_{\varphi_0}\right)$,  $s\in\mathbb{R}$,  where  $c_{\varphi_0}$ and $s_{\varphi_0}$ denote $\cos \varphi_0$ and $\sin \varphi_0$, respectively.
	Its constant curvature is $\kappa = \tan \varphi_0$. In particular, the great circle $\xi_0$ will be called the \textit{equator} of $\s^2$.
\end{example}

\begin{example}\label{ex:great circle}
	Given $\theta \in (0,\pi) $, let $\mu_\theta $ be the \textit{great circle} in $\s^2$ given by $\sin \theta \, y = \cos \theta \, z$, $\theta $ being the angle between $\xi_0$ and $\mu_\theta$. Putting $c_\theta=\cos \theta$ and $s_\theta=\sin \theta$, its arc length parametrization is given by
	$\mu_\theta (s)=(\cos s, c_\theta \sin s, s_\theta \sin s) $, $s \in \R$.
	In particular, the great circle $\mu_{\pi/2} $ orthogonal to $\xi_0$ will be called the \textit{zero-meridian} of $\s^2$.
\end{example}

\begin{example}\label{ex:small circle}
	Given $\delta \neq 0$, let $\eta_{\delta}$ be the \textit{small circle}  in $\s^2$ orthogonal to $\xi_0$ given by $y=\tanh \delta$. Abbreviating $ch_\delta = \cosh \delta$ and $th_\delta = \tanh \delta$, its arc length parametrization is given by
	$\eta_\delta (s)=\left(\cos (ch_\delta s)/ch_\delta, th_\delta,\sin (ch_\delta s)/ch_\delta\right)$, $s\in \R$, and its
	 constant curvature is $\kappa = -\sinh \delta$. If $\delta =0$, we recover the zero-meridian $\mu_{\pi/2}$.
\end{example}

\subsection{Rotational surfaces in $\s^3$}\label{Sect:SurfacesS3}

Throughout this paper we will identify the 3-sphere $\s^3$ with the unit sphere in $\R^4$; that is,
$\s^3 = \{ (x_1,x_2,x_3,x_4)\in \R^4 : x_1^2+x_2^2+x_3^2+x_4^2=1 \}$.
To study rotational surfaces in $\s^3$, we will consider the {\em generating curve} $\xi=(x,y,z)$ in the totally geodesic 2-sphere $x_4=0$, and the equator $\xi_0$ as the {\em axis of revolution}. We will denote by $S_\xi$ the corresponding surface and we deduce its following parametrization:
\begin{equation}\label{eq:paramXrot}
S_\xi \equiv X(s,t) = 
\begin{pmatrix}
	1 & 0 & 0 & 0\\
	0 & 1 & 0 & 0\\
	0 & 0 & \cos t & -\sin t\\
	0 & 0 & \sin t & \cos t
\end{pmatrix}
\begin{pmatrix}
	x(s)\\
	y(s)\\
	z(s)\\
	0
\end{pmatrix}
=(x(s),y(s),z(s)\cos t, z(s)\sin t),
\end{equation}
$s\in I \subseteq \mathbb{R}$, $t\in (-\pi,\pi)$. If one can write $\xi \equiv z=R(x,y)\neq 0$, then the implicit equation of $S_\xi $ is given by $x_3^2+x_4^2=z^2=R(x_1,x_2)^2$.
It is a straightforward computation to control the local geometry of $S_{\xi}$ through its first and second fundamental forms with respect to $(s,t)$-coordinates, obtaining $I\equiv ds^2+z^2dt^2$ and $II\equiv \kappa \, ds^2 + z (\dot x y - x \dot y)dt^2$. Therefore, the principal curvatures $k_{\text m}$ and $k_{\text p}$ are reached along the meridians $dt=0$ and the parallels $ds=0$ of $S_{\xi}$ respectively, and are given by
\begin{equation}\label{eq:kmkp_1}
	k_{\text m}=\kappa, \quad k_{\text p}=\frac{\dot x y - x \dot y}{z}.
\end{equation}
We point out that if we use geographical coordinates for $\xi=(\lambda,\varphi)$ and look at the 3-sphere as $\s^3=\{ (\omega_1,\omega_2)\in \C^2 : |\omega_1|^2+|\omega_2|^2=1 \}$, the previous parametrization 
$$
X(s,t)=\left( 
x(s) , y(s) , 
z(s) \cos t , z(s) \sin t \right)
$$
of $S_\xi$ is rewriting as
\begin{equation}\label{eq:paramXgeo}
X(s,t)=\left( 
\cos \varphi (s) \,e^{i\lambda (s)} , \sin \varphi (s)\, e^{it} \right).
\end{equation}
\begin{remark}\label{rem:axially}
The rotational surfaces $S_\xi$ given by \eqref{eq:paramXrot} or \eqref{eq:paramXgeo} are congruent to the {\em axially symmetric} (with respect to the geodesic $\xi_0$) surfaces in $\s^3$ considered in \cite{AL15}, \cite{Br13b}, \cite{Pr10} or \cite{Pr16}. However, they choose the plane curve $\alpha(s)=(x(s),y(s))$ contained in the unit disk as profile curve, and then $z(s)=\sqrt{1-|\alpha(s)|^2}$.
\end{remark}

\begin{example}\label{ex:tot_geod_S3}
The {\em totally geodesic equatorial sphere} $\s^2 \hookrightarrow \s^3$, given by $x_2=0$, can be seen as the rotational surface generated by the zero-meridian  $\mu_{\pi/2} $ (see Example \ref{ex:great circle}). Note that $S_{\mu_{\pi/2}}\equiv x_3^2+x_4^2=1-x_1^2$. Obviously, $k_{\text m}=0=	k_{\text p}$.
\end{example}

\begin{example}\label{ex:tot_umb_S3}
	The {\em totally umbilical spheres} 
	$\s^2(R_\delta) \hookrightarrow\s^3$, $0<R_\delta = \sech \delta \leq 1$, given by $x_2=\tanh \delta$, are the rotational surfaces generated by the small circles $\eta_{\delta} $, $\delta \geq 0$ (see Example \ref{ex:small circle}). Note that $S_{\eta_{\delta}}\equiv x_3^2+x_4^2=R_\delta^2-x_1^2$.
	If $\delta =0$, we recover the totally geodesic equatorial 2-sphere $x_2=0$ (Example \ref{ex:tot_geod_S3}). See Figure \ref{fig:TotUmbS2}.
	Using \eqref{eq:kmkp_1}, we get that $k_{\text m}=-\sinh \delta=	k_{\text p}$.
\end{example}

\begin{figure}[h!]
	\begin{center}
		\includegraphics[height=5.2cm]{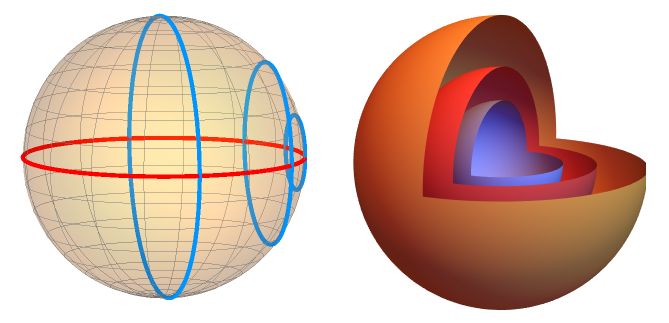}
		\caption{ 
			Generating curves $\eta_{\delta} $, $\delta \geq 0$, and open sights of a stereographic projection of totally umbilical spheres
		$S_{\eta_{\delta}}$.
		}
		\label{fig:TotUmbS2}
	\end{center}
\end{figure}

\begin{example}\label{ex:standard_tori}
	The {\em standard tori} $\s^1(\cos\varphi_0)\times \s^1\left( \sin \varphi_0 \right) \hookrightarrow\s^3$  are the rotational surfaces generated by the parallels $\xi_{\varphi_0}$, $\varphi_0 \in (0,\pi/2)$ (see Example \ref{ex:parallel}). Note that $S_{\xi_{\varphi_0}}\equiv x_3^2+x_4^2=\sin^2 \varphi_0$. See Figure \ref{fig:StandardTori}. Using \eqref{eq:kmkp_1}, their principal curvatures are $k_{\text m}=\tan \varphi_0 $ and $ k_{\text p}=-\cot \varphi_0 $. Therefore they are CMC flat tori. The {\em Clifford torus}, corresponding to $\varphi_0=\pi/4$, is the only embedded minimal (i.e.\ with zero mean curvature) surface in $\s^3$ of genus one (cf.\ \cite{Br13a}). 
\end{example}

\begin{figure}[h!]
	\begin{center}
		\includegraphics[height=5.2cm]{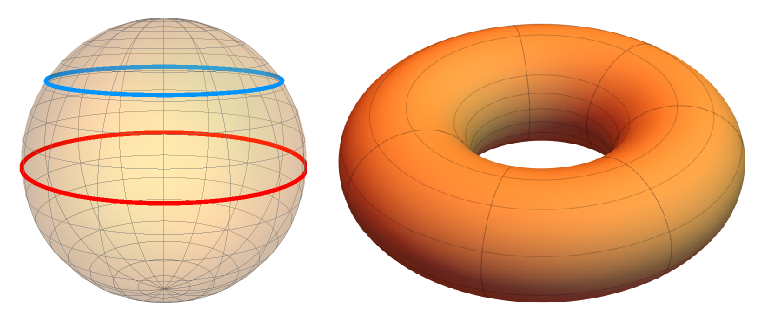}
		\caption{ 
			Generating curve $\xi_{\varphi_0}$, $\varphi_0 \in (0,\pi/2)$, and stereographic projection of a standard torus
			$S_{\xi_{\varphi_0}}$.
		}
		\label{fig:StandardTori}
	\end{center}
\end{figure}

\begin{example}\label{ex:lun_esf_S3}
	The rotational surfaces generated by the great circles $\mu_\theta $, $\theta \in (0,\pi) $, $\theta \neq \pi/2$ (see Example \ref{ex:great circle}) will be called {\em spherical moons}. See Figure \ref{fig:SphMoon}. Observe that 
	$S_{\mu_{\theta}}\equiv x_3^2+x_4^2=\tan^2 \theta \, x_2^2$ and so they can be thought as spherical cones. Clearly, using \eqref{eq:kmkp_1}, $k_{\text m}=0$. 
	\end{example}

\begin{figure}[h!]
	\begin{center}
		\includegraphics[height=5.2cm]{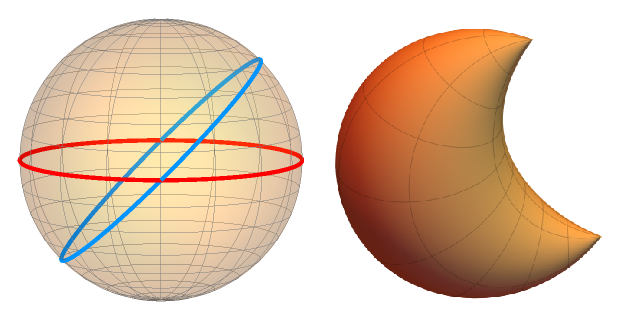}
		\caption{ 
			Generating curve $\mu_\theta $, $\theta \in (0,\pi) $, $\theta \neq \pi/2$, and stereographic projection of a spherical moon
			$S_{\mu_{\theta}}$.
		}
		\label{fig:SphMoon}
	\end{center}
\end{figure}

%\vspace{0.2cm}

%%%%%%%%%%%%%%%%%%%%%%%%%%%%%%%%%%%%%%%%%%%%%%%%%%%%%%%%%%%%%%%%%%%%%%%%%%%%%%%%%%%%%%%

\section{The spherical angular momentum} \label{Sect:Momentum}

\subsection{The spherical angular momentum of a spherical curve}\label{Sect:KCurvesS2}

Following \cite{CCIs23} or \cite{CCIs24}, we introduce a smooth function $\mathcal K$ associated to any spherical curve $\xi $ (see Section \ref{Sect:CurvesS2}), which completely determines it (up to a family of distinguished isometries) in relation with its relative position with respect to the fixed geodesic $\xi_0$. 

At any given point $\xi (s)=(x(s),y(s),z(s)), \, s\in I$, we introduce the {\em spherical angular momentum} (with respect to the equator $\xi_0$), denoted by $\mathcal K (s)$, as the vertical component of the unit normal vector $-N(s)$. Concretely, we define
\begin{equation}\label{eq:spherical momentum}
	\mathcal K(s) := -\langle N(s), (0,0,1) \rangle = \dot x(s) y(s) -x(s) \dot y(s),
\end{equation}
where $s$ is the arc-length parameter of $\xi$.
In physical terms,  $\mathcal K (s)$ may be described as the angular momentum of a particle of unit mass with unit speed and spherical trajectory $\xi (s)$. We point out that $\mathcal K$ is a smooth function that takes values in $[-1,1]$. It is well defined, up to sign, depending on the orientation of the normal to $\xi$. And it easy to check that it is invariant under rotations around $z$-axis of $\xi$.

\begin{example}\label{rm:Kcircles}
The spherical angular momentum of the great circles $\mu_\theta$, $\theta \in (0,\pi)$, described in Example \ref{ex:great circle}, is constant, concretely $\mathcal K_\theta=-\cos \theta$. So it distinguishes all the geodesics in $\s^2$ by its relative position with respect to the equator $\xi_0$. In particular, the zero-meridian $\mu_{\pi/2}$ has null spherical angular momentum.
%It is easy to check that the spherical angular momentum of the parallels $\xi_{\varphi_0}$, $\varphi_0\in (0,\pi/2)$, is also constant $-\cos \varphi_0$ (see Example \ref{ex:parallel}). 
For the small circles $\eta_\delta$, $\delta \neq 0$, described in Example \ref{ex:small circle}, we obtain that $\mathcal K_\delta (s)=- th_\delta  \sin(ch_\delta  s)=-\sinh \delta \, z_\delta(s)$,  being $\kappa_\delta=-\sinh \delta$ precisely the curvature of $\eta_\delta$.
\end{example}

We now recall the main local result of this section from \cite[Theorem 3.1]{CCIs24}, which 
shows how the spherical angular momentum $\mathcal K = \mathcal K (z) $ determines uniquely the spherical curve $\xi =(x,y,z)$, assuming $z$ non-constant. To this end, 
we pay attention to the geometric condition that the curvature $\kappa$ of $\xi$ depends on the distance from $\xi=(x,y,z)$  to $\xi_0\equiv z=0$.
So we can assume, at least locally, that $\kappa=\kappa(z)$.

\begin{theorem}\label{Th:K determines}
	Any spherical curve $\xi=(x,y,z):I\subseteq \R \rightarrow \s^2$, with $z$ non-constant, is uniquely determined by its spherical angular momentum $\mathcal K$ as a function of its distance to the equator $\xi_0$, that is, by $\mathcal K= \mathcal K(z)$. The uniqueness is modulo rotations around the $z$-axis. Moreover, the curvature of $\xi $ is given by $\kappa (z)=\mathcal K' (z)$.
\end{theorem}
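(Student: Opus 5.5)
We first derive the formula $\kappa(z)=\mathcal K'(z)$ and then reconstruct $\xi$ from the function $\mathcal K(z)$ by integrating a first-order system, so that uniqueness modulo rotations around the $z$-axis follows.

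\emph{Step 1: the curvature formula.} Differentiate $\mathcal K=-\langle N,(0,0,1)\rangle$ using the Frenet equation $\dot N=-\kappa\,\dot\xi$. This gives $\dot{\mathcal K}=\kappa\,\langle \dot\xi,(0,0,1)\rangle=\kappa\,\dot z$. Since $z$ is non-constant, on any subinterval where $\dot z\neq0$ we may use $z$ as a local parameter, and the chain rule gives $\mathcal K'(z)=\kappa$. Formally, $\kappa=\mathcal K'(z)$ extends wherever $\mathcal K(z)$ is smooth. Thus $\mathcal K(z)$ determines the curvature as a function of $z$.

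\emph{Step 2: recovering $z(s)$.} Work in cylindrical-type coordinates: write $x=\rho\cos\lambda$ and $y=\rho\sin\lambda$ with $\rho=\sqrt{1-z^2}=\cos\varphi$. Then $\mathcal K=\dot x y-x\dot y=-\rho^2\dot\lambda=-(1-z^2)\dot\lambda$. The unit-speed condition becomes $\dot\rho^2+\rho^2\dot\lambda^2+\dot z^2=1$. Since $\dot\rho^2=z^2\dot z^2/(1-z^2)$, this reads
\[
\frac{\dot z^2}{1-z^2}+\frac{\mathcal K(z)^2}{1-z^2}=1,\qquad\text{that is,}\qquad \dot z^2=1-z^2-\mathcal K(z)^2 .
\]
Given $\mathcal K(z)$, this autonomous ODE determines $z(s)$ up to translation of $s$ and orientation, by inverting $s=\int dz/\sqrt{1-z^2-\mathcal K(z)^2}$ on intervals where $\dot z\ne0$.

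\emph{Step 3: recovering the longitude and concluding.} Next, $\lambda(s)=-\int \mathcal K(z(s))/(1-z(s)^2)\,ds$ is determined up to an additive constant. That constant is exactly a rotation around the $z$-axis. So $\xi=(\sqrt{1-z^2}\cos\lambda,\sqrt{1-z^2}\sin\lambda,z)$ is determined modulo such rotations. One should also note that $\mathcal K$ is invariant under these rotations, so they are the full ambiguity (besides reparametrization). Conversely, any function $\mathcal K(z)$ with $1-z^2-\mathcal K^2>0$ yields a curve through these formulas whose momentum is $\mathcal K$, confirming that the correspondence is well posed.

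\emph{Main obstacle.} The delicate point is the handling of points where $\dot z=0$, i.e.\ turning points where $1-z^2=\mathcal K^2$. There $z$ fails to be a local parameter and the quadrature in Step 2 is singular. I would treat this as in classical central-force problems. Differentiating $\dot z^2=1-z^2-\mathcal K^2$ gives the second-order equation $\ddot z=-z-\mathcal K(z)\mathcal K'(z)$, whose solution is unique given initial data. This lets the curve continue smoothly through turning points, with uniqueness preserved by the standard ODE uniqueness theorem. This argument is valid when $\mathcal K\mathcal K'$ is Lipschitz in $z$.
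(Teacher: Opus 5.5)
Your proposal is correct and follows essentially the paper's route: the paper defers to \cite[Theorem 3.1]{CCIs24}, whose constructive proof (summarized in Remark \ref{rm:algorithm}) is exactly your pair of quadratures $s=\int dz/\sqrt{1-z^2-\mathcal K(z)^2}$ and $\dot\lambda=\mathcal K/(z^2-1)$, and your second-order equation $\ddot z=-z-\mathcal K\mathcal K'$ (which also follows at once from $\ddot\xi=-\xi+\kappa N$) is a useful addition for the turning points, which the paper does not discuss. One small correction, which applies equally to the paper's formulation: flipping the sign of $\dot z$ while keeping the same $\mathcal K(z)$ does not merely reparametrize the curve but replaces it by its mirror image in a vertical plane, so uniqueness modulo rotations holds only with the sign of $\dot z$ fixed (as the quadrature implicitly does), or for a curve that passes through a turning point, since it is then symmetric about the meridian through that point.
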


\begin{remark}\label{rm:algorithm}
 Along the proof of Theorem \ref{Th:K determines} in \cite[Theorem 3.1]{CCIs24},
		we even determine by quadratures in a constructive explicit way the spherical curve $$\xi=(x,y,z)=(\cos \varphi  \cos \lambda , \cos \varphi  \sin \lambda , \sin \varphi )$$ determined by $\mathcal K= \mathcal K(z)$. Concretely:
		\begin{enumerate}
			\item The arc-length parameter $s$  in terms of $z$, is given ---up to translations of the parameter--- by the integral:
			$$
			s=s(z)=\int\!\frac{dz}{\sqrt{1-z^2-\mathcal K(z)^2}},
			$$
			where $\mathcal K(z)^2 + z^2 < 1 $, and we
			invert $s=s(z)$ to get $z=z(s)$ and the latitude 
			$
			\varphi(s)=\arcsin z(s)
			$ of $\xi$.
			\item The longitude of $\xi$ in terms of $s$, is given  ---up to a rotation around the $z$-axis--- by the integral:
			$$
			\lambda (s)=\int \! \frac{\mathcal K(z(s))}{z(s)^2-1}ds ,
			$$
			where $|z(s)|<1$.
		\end{enumerate}
\end{remark}

We show a simple example of the algorithm given in  Remark~\ref{rm:algorithm}:

\begin{example}[$\kappa\!\equiv\! 0$]\label{ex:Kcte}
	{\rm Then $\mathcal K \!\equiv \! c \!\in\! \R$, $s\!=\arcsin \frac{z}{\sqrt{1-c^2}} $, with $|c|<1$.
		So $z(s)=\sqrt{1-c^2} \sin s $, $\lambda(s)\!=\!-\!\arctan (c \tan s)$ and, finally, $\xi (s)\!=\!(\cos s,-c \sin s, \sqrt{1-c^2} \sin s) $, which corresponds to the great circle $\sqrt{1-c^2}\,y+c\,z=0 $.
		Up to rotations around the $z$-axis, they provide arbitrary great circles in $\s^2$, except the equator. Putting $c=-\cos \theta$, we recover the $\mu_\theta$ given in Example \ref{ex:great circle}. As a consequence of Theorem~\ref{Th:K determines}, the great circle $\mu_\theta $ is the only spherical curve (up to rotations around the $z$-axis) with constant spherical angular momentum $\mathcal K_\theta\!\equiv\! -\cos \theta$. % (see Figure~\ref{Circles}). 
	}
\end{example}

\subsection{The spherical angular momentum of a rotational surface in $\s^3$}\label{Sect:KRotSurfacesS3}

 It is clear that the geometry of a rotational surface $S_\xi$ is completely controlled by the geometry of its generating curve $\xi$ (see Section \ref{Sect:SurfacesS3} for details). As a consequence of Theorem \ref{Th:K determines} and the fact that rotations around the $z$-axis in $\s^2 $ are nothing but translations along $\xi_0$ in $\s^3$, we conclude immediately the following interesting result.
\begin{corollary}\label{cor:Kkey}
	Any rotational surface $S_\xi $  in $\s^3$, where $\xi=(x,y,z)$ with $z$ non-constant, is uniquely determined, up to translations along $\xi_0$, by the spherical angular momentum $\mathcal K=\mathcal K(z)$ of its generating curve $\xi $.
\end{corollary}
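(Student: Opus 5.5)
The corollary follows from Theorem \ref{Th:K determines} once we check that rotations of $\s^2$ about the $z$-axis act on $S_\xi$ as isometries of $\s^3$ preserving $\xi_0$. The plan has three steps.

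\textbf{Step 1 (uniqueness of the curve).} Let $S_\xi$ and $S_{\tilde\xi}$ be rotational surfaces whose generating curves have the same spherical angular momentum as a function of $z$, with $z$ non-constant. By Theorem \ref{Th:K determines}, $\tilde\xi=R_\alpha\,\xi$ after a possible translation of the arc-length parameter, where $R_\alpha$ is the rotation of angle $\alpha$ about the $z$-axis in $\R^3$. In geographical coordinates this means $\tilde\varphi=\varphi$ and $\tilde\lambda=\lambda+\alpha$. This can be read directly off Remark \ref{rm:algorithm}: $z(s)$ is fixed by $\mathcal K(z)$ up to a shift in $s$, and $\lambda$ is fixed up to an additive constant.

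\textbf{Step 2 (transfer to the surfaces).} Insert $\tilde\xi$ into the parametrization \eqref{eq:paramXgeo}:
$$\tilde X(s,t)=\left(\cos\varphi(s)\,e^{i(\lambda(s)+\alpha)},\ \sin\varphi(s)\,e^{it}\right)=\Phi_\alpha\big(X(s,t)\big),$$
where $\Phi_\alpha(\omega_1,\omega_2)=(e^{i\alpha}\omega_1,\omega_2)$. The map $\Phi_\alpha$ is a unitary map of $\C^2$, hence an isometry of $\s^3$. It fixes $\omega_2$, so it leaves the great circle $\xi_0=\{\omega_2=0\}$ invariant and moves each of its points by arc length $\alpha$. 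It is therefore the translation along $\xi_0$, and $S_{\tilde\xi}=\Phi_\alpha(S_\xi)$. Shifting $s$ only reparametrizes the surface.

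\textbf{Step 3 (existence and well-definedness).} The statement \emph{determined by} $\mathcal K(z)$ also needs $\mathcal K$ to be an invariant of the surface. Two checks suffice:
\begin{enumerate}
\item $\mathcal K$ is invariant under rotations about the $z$-axis, as remarked after \eqref{eq:spherical momentum}.
\item $\mathcal K$ appears in $k_{\text p}=\mathcal K/z$ by \eqref{eq:kmkp_1}, so it is tied to the geometry of $S_\xi$. Its sign depends only on orientation.
\end{enumerate}
Conversely, any admissible $\mathcal K(z)$ produces a curve $\xi$ by Remark \ref{rm:algorithm}, and hence a surface $S_\xi$.

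None of these steps is hard. The only point needing care is identifying $\Phi_\alpha$ as a translation along $\xi_0$ rather than some other isometry, and the complex form \eqref{eq:paramXgeo} makes this immediate.
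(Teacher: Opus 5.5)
Your proposal is correct and follows the paper's argument. The paper also deduces the corollary directly from Theorem~\ref{Th:K determines}, together with the fact that rotations about the $z$-axis in $\s^2$ act on $\s^3$ as translations along $\xi_0$; the paper only asserts this fact, whereas you verify it explicitly via \eqref{eq:paramXgeo} with $\Phi_\alpha(\omega_1,\omega_2)=(e^{i\alpha}\omega_1,\omega_2)$, and your Step 3 on invariance and existence is not needed for the uniqueness claim.
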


\begin{remark}\label{rm:Ksimple}
	Corollary \ref{cor:Kkey} means that, apart from the standard tori corresponding to $z$ constant (see Example \ref{ex:standard_tori}), any rotational surface in the 3-sphere is determined by its spherical angular momentum $\mathcal K=\mathcal K(z)$, i.e.\ the one of its generating curve. 
	For instance, the totally geodesic equatorial sphere, see Example \ref{ex:tot_geod_S3}, is uniquely determined by a null spherical angular momentum $\mathcal K\equiv 0$. In addition, the spherical moons in Example \ref{ex:lun_esf_S3} are uniquely determined by a non-null constant spherical angular momentum (see Example \ref{rm:Kcircles}).
	Moreover, the totally umbilical sphere of radius $R_\delta = \sech \delta \leq 1$ in Example \ref{ex:tot_umb_S3} is uniquely determined by a linear spherical angular momentum $\mathcal K(z)=k_0 z$, being $k_0=-\sinh \delta$, $\delta \geq 0$ (see Example \ref{rm:Kcircles}).
\end{remark}

On the other hand, \textit{Weingarten surfaces} are defined by a functional relation between their principal curvatures, see for instance \cite{Ch45} or \cite{W61}.
For the particular case of rotational Weingarten surfaces, we simply write $\Phi(k_{\text m}, k_{\text p})=0$, with $\Phi $ a differentiable function.
Thus, taking into account \eqref{eq:kmkp_1}, \eqref{eq:spherical momentum} and Theorem \ref{Th:K determines}, we express the principal curvatures of a rotational surface $S_\xi $  in $\s^3$, with $\xi=(x,y,z)$, in terms of the spherical angular momentum $\mathcal K=\mathcal K(z)$ of its generating curve $\xi $ and the non-constant distance $z$ from $\xi$ to the equator $\xi_0$:
\begin{equation}\label{eq:kmkp}
	k_{\text m}=\mathcal K'(z), \quad k_{\text p}=\frac{\mathcal K(z)}{z}.
\end{equation}
Therefore, an immediate consequence of \eqref{eq:kmkp} is the following key result.
\begin{corollary}\label{cor:edoW}
	Any Weingarten relation $\Phi(k_{\text m}, k_{\text p})=0$ on a rotational surface $S_{\xi} $, $\xi=(x,y,z)$, in  $\s^3$ translates through \eqref{eq:kmkp} into a first-order ordinary differential equation $$\hat \Phi (z,\mathcal K , \mathcal K')=0$$ for the  spherical angular momentum $\mathcal K=\mathcal K (z)$  of its generatrix curve as a function of the non-constant distance $z$ from $\xi$ to the equator $\xi_0$.
\end{corollary}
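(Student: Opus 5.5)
The statement is essentially a substitution, so I would argue directly from \eqref{eq:kmkp}. The one thing to check is that \eqref{eq:kmkp} really holds for every rotational surface with $z$ non-constant. Since $z$ is non-constant, on any subinterval where $\dot z\neq 0$ I can locally invert $s\mapsto z(s)$ and regard both $\kappa$ and $\mathcal K$ as functions of $z$.

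First, I would take the principal curvatures from \eqref{eq:kmkp_1}: $k_{\text m}=\kappa$ and $k_{\text p}=(\dot x y-x\dot y)/z$. Comparing with the definition \eqref{eq:spherical momentum}, the numerator of $k_{\text p}$ is exactly $\mathcal K$, so $k_{\text p}=\mathcal K(z)/z$. Here $z\neq0$ away from the axis, which is where $S_\xi$ is a regular surface.

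Second, I would invoke Theorem \ref{Th:K determines}, which gives $\kappa(z)=\mathcal K'(z)$. This can also be checked directly: differentiating $\mathcal K=\dot x y-x\dot y$ and using $\ddot\xi=-\xi+\kappa N$ gives $\dot{\mathcal K}=\langle \ddot \xi,(y,-x,0)\rangle=\kappa\langle N,(y,-x,0)\rangle$. A cross-product computation shows $\langle N,(y,-x,0)\rangle=\dot z$, so $\dot{\mathcal K}=\kappa\dot z$ and hence $d\mathcal K/dz=\kappa$. This yields $k_{\text m}=\mathcal K'(z)$.

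Finally, substituting both expressions into $\Phi(k_{\text m},k_{\text p})=0$ gives
\[
\hat\Phi(z,\mathcal K,\mathcal K'):=\Phi\bigl(\mathcal K',\mathcal K/z\bigr)=0,
\]
which is a first-order ODE in $\mathcal K(z)$. By Corollary \ref{cor:Kkey}, its solutions determine the surface up to translations along $\xi_0$. No step is a real obstacle. The only care needed is the local inversion $z=z(s)$ on intervals where $\dot z\neq0$, and the sign convention for $N$, which changes $\mathcal K$ and both curvatures by the same sign consistently.
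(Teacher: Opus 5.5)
Your proposal is correct and follows the paper's route exactly: combine \eqref{eq:kmkp_1}, the definition \eqref{eq:spherical momentum} and $\kappa=\mathcal K'(z)$ from Theorem \ref{Th:K determines} to obtain \eqref{eq:kmkp}, then substitute into $\Phi$. The paper simply cites the curvature formula, whereas you also verify it directly. Your computation that $\dot{\mathcal K}=\kappa\dot z$, via $\langle N,(y,-x,0)\rangle=\dot z$, is a correct self-contained check of that formula.
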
	
We can illustrate it with three simple examples:

Suppose $k_{\rm p}=0$. Then \eqref{eq:kmkp} gives $\mathcal K\equiv 0$. Using Remark \ref{rm:Ksimple}, we conclude that {\em the totally geodesic equatorial sphere is the only rotational surface in $\s^3$ with $k_{\rm p}=0$}.
	
Suppose $k_{\rm m}=0$. Now \eqref{eq:kmkp} implies that $\mathcal K$ is constant. Using Remark \ref{rm:Ksimple} again, we deduce that {\em the totally geodesic equatorial sphere and the spherical moons are the only rotational surfaces in $\s^3$ with $k_{\rm m}=0$}.
	
Suppose $k_{\rm m}=k_{\rm p}$. From \eqref{eq:kmkp} we get $\mathcal K' = \mathcal K /z$, whose general solution is $\mathcal K(z)=k_0 z$, $k_0 \in \R$. In this way, using Remark \ref{rm:Ksimple}, we corroborate that the rotational 2-spheres described in Example \ref{ex:tot_umb_S3} are the only totally umbilical.
	
If one imposes $k_{\rm m}=-k_{\rm p}$, one is heading towards the classification of rotational minimal surfaces in $\s^3 $ given in \cite[Theorem 5.2]{CCIs24} in which the totally geodesic equatorial sphere, the Clifford torus (see Example \ref{ex:standard_tori}) and the spherical catenoids (see \cite[Section 3.2]{CCIs24}) appear. 
	
%\vspace{0.2cm}

%%%%%%%%%%%%%%%%%%%%%%%%%%%%%%%%%%%%%%%%%%%%%%%%%%%%%%%%%%%%%%%%%%%%%%%%%%%%%%%%%%%%%%%

\section{Spherical conics} \label{Sect:Conics}
In this section, we study spherical conics from two different perspectives: one more geometric and the other more analytical. For the first overview, we rely particularly  on \cite{T06}. 

\subsection{Spherical conics as geometric loci} \label{Sect:ConicsLoci}

Considering the intrinsic geodesic distance $dist$ on $\s^2$, we can define the spherical conics as geometric loci in the usual way:  Given two fixed different points $F_1$ and $F_2$ on the sphere $\s^2$, 
the set 
$$
	 \mathcal E _{d,e}(F_1,F_2)=\{ P\in \s^2 :  dist(P,F_1)+dist(P,F_2)=2d , \  \pi/2>d>e=dist(F_1,F_2)/2>0 \} 
$$
is called a \textit{spherical ellipse} (of parameters $d,e$) with foci $F_1$ and $F_2$, and the set 
$$ \mathcal H _{d,e}(F_1,F_2) =\{ P\in \s^2 : | dist(P,F_1)-dist(P,F_2) |=2d, \  0<d<e=dist(F_1,F_2)/2<\pi/2 \} $$
is called a \textit{spherical hyperbola} (of parameters $d,e$) with foci $F_1$ and $F_2$. 
While $2e$ is the focal distance, $2d$ can be interpreted as the distance between the vertices, i.e.\ the points of the conic on the great circle connecting the foci. It is obvious that a spherical hyperbola has two branches: 
$$ 
\mathcal H^1_{d,e}(F_1,F_2) \equiv dist(P,F_1)-dist(P,F_2) =2d, \quad
\mathcal H^2_{d,e}(F_1,F_2) \equiv dist(P,F_2)-dist(P,F_1) =2d. 
$$

Given $P\in \s^2$, we denote $\overline P:=-P$ its antipodal point. In the same way, $\overline S$ stands for the image of $S\subset \s^2$ under the antipodal map. If we also abbreviate $\widehat{PQ}=dist(P,Q)$, for any $P,Q\in \s^2$, then it is clear that $\widehat{\overline{P}Q}=\pi-\widehat{PQ}$. So we deduce that
$$  \overline{\mathcal E _{d,e}}(F_1,F_2)=\mathcal E _{\pi-d,e}(F_1,F_2), \quad 
\overline{\mathcal H^1_{d,e}}(F_1,F_2)=\mathcal H^2 _{d,e}(F_1,F_2).$$

%Due to the geometry of the sphere, both spherical hyperbolas and spherical ellipses have two branches, one antipodal to the other. In the case of hyperbolas, both branches have the same parameters; however, in the case of ellipses, once the parameter $e$ is set, the branches have supplementary  parameters, that is, $d$ and $\pi -d$.

On the other hand, given a great circle $\mu$  and a fixed point $F_0$ (not belonging to $\mu$) in $S^2$, the set
$$ \mathcal P (F_0,\mu)=\{ P\in \s^2 : dist(P,F_0)=dist(P,\mu) \} $$
is called a \textit{spherical parabola} with focus $F_0$ and directrix $\mu$. 

Thanks to the closed geometry of the sphere, there are special relationships between the three types of conics defined above.

\begin{proposition}\label{Prop:conics} \phantom{salto}
	\begin{enumerate}
		\item Each branch of any spherical hyperbola with foci $F_1$ and $F_2$ is a spherical ellipse of foci $F_1$ and the antipodal point $\overline{F_2}$ of $F_2$ in $\s^2$. Specifically:
		$$ \mathcal H^1_{d,e}(F_1,F_2)=\mathcal E _{\pi/2+d,e}(F_1,\overline{F_2}), \quad
		  \mathcal H^2_{d,e}(F_1,F_2)=\mathcal E _{\pi/2-d,e}(F_1,\overline{F_2}).
		$$
		\item Any spherical parabola with focus $F_0$ and directrix $\mu$ is a branch of  a spherical hyperbola with foci $F_0$ and the pole
		$\widetilde{F_0}$  of $\mu$ in the opposite hemisphere to $F_0$ and parameter $d=\pi/4$. Specifically:
		$$
		\mathcal P (F_0,\mu)=\mathcal H^1_{\pi/4,e}(F_0,\widetilde{F_0}).
		$$
	\end{enumerate}
\end{proposition}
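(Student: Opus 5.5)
The plan is to reduce everything to two elementary facts about the intrinsic distance on $\s^2$: the antipodal identity $\widehat{\overline{P}Q}=\pi-\widehat{PQ}$ already noted above, and the formula for the distance from a point to a great circle $\mu$ in terms of the poles of $\mu$. With these, each claimed equality of sets becomes an algebraic rearrangement of the defining condition of the locus. I will also check that the resulting parameters satisfy the required inequalities.

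For (1), take $P\in\mathcal H^1_{d,e}(F_1,F_2)$, so $\widehat{PF_1}-\widehat{PF_2}=2d$. Substituting $\widehat{PF_2}=\pi-\widehat{P\overline{F_2}}$ gives
\[
\widehat{PF_1}+\widehat{P\overline{F_2}}=\pi+2d=2\left(\tfrac{\pi}{2}+d\right).
\]
Every step is reversible, so the two sets coincide. For $\mathcal H^2_{d,e}$ the same substitution in $\widehat{PF_2}-\widehat{PF_1}=2d$ gives $\widehat{PF_1}+\widehat{P\overline{F_2}}=\pi-2d=2(\pi/2-d)$.

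Next I check the parameters. The new focal half-distance is $\tfrac12\widehat{F_1\overline{F_2}}=\pi/2-e$, which lies in $(0,\pi/2)$. The half-sums satisfy $\pi/2-d>\pi/2-e$ (because $d<e$) and $\pi/2+d>\pi/2-e$. The value $\pi/2+d$ exceeds $\pi/2$. I would interpret this branch through the antipodal relation $\overline{\mathcal E_{d,e}}=\mathcal E_{\pi-d,e}$, so that it is the antipode of a genuine ellipse with parameter $\pi/2-d$. The statement's notation keeps $e$ as a label for the original pair of foci; I would make explicit in the write-up that the true half focal distance of $F_1,\overline{F_2}$ is $\pi/2-e$.

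For (2), let $\widetilde{F_0}$ be the pole of $\mu$ in the hemisphere opposite to $F_0$. The basic fact is
\[
dist(P,\mu)=\left|\widehat{P\widetilde{F_0}}-\tfrac{\pi}{2}\right|.
\]
First I show that every point of $\mathcal P(F_0,\mu)$ lies in the closed hemisphere of $F_0$. If $P$ lay strictly on the other side, any minimizing geodesic from $F_0$ to $P$ would cross $\mu$. This gives $\widehat{PF_0}\ge dist(F_0,\mu)+dist(P,\mu)>dist(P,\mu)$, a contradiction.

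On the hemisphere of $F_0$ we have $\widehat{P\widetilde{F_0}}\ge\pi/2$. The focus–directrix condition therefore reads $\widehat{PF_0}=\widehat{P\widetilde{F_0}}-\pi/2$, that is,
\[
\bigl|\widehat{PF_0}-\widehat{P\widetilde{F_0}}\bigr|=\tfrac{\pi}{2}=2\cdot\tfrac{\pi}{4},
\]
with the branch being the one closer to $F_0$. Conversely, a point of that branch satisfies $\widehat{P\widetilde{F_0}}>\pi/2$, so it lies in $F_0$'s hemisphere and on the parabola. The parameter check is that $2e=\widehat{F_0\widetilde{F_0}}=\pi/2+dist(F_0,\mu)\in(\pi/2,\pi)$, so $0<d=\pi/4<e<\pi/2$, as the hyperbola definition requires.

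The main obstacle is bookkeeping rather than geometry. One issue is matching the branch labels $\mathcal H^1$ versus $\mathcal H^2$, and the order of the foci, with the sign of the difference obtained; the branch found above is the one on $F_0$'s side. The other is reconciling the parameter conventions, namely $d<\pi/2$ and the meaning of $e$ for the new pair of foci. I would handle both by stating explicitly which branch (the one nearer $F_0$) and which conventions are meant, invoking the antipodal relations when a parameter leaves the admissible range.
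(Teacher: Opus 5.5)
Your proposal is correct and follows the paper's proof. In (1) you use the substitution $\widehat{P\overline{F_2}}=\pi-\widehat{PF_2}$, and in (2) the identity $dist(P,\mu)+\pi/2=\widehat{P\widetilde{F_0}}$; your hemisphere argument supplies the justification for that identity which the paper leaves implicit. Your sign $\widehat{P\widetilde{F_0}}-\widehat{PF_0}=\pi/2$ is the right one (the paper's displayed conclusion reverses it, so with its own labelling the parabola is really $\mathcal H^2_{\pi/4,e}(F_0,\widetilde{F_0})$), and your only slip is that $2e\in(\pi/2,\pi]$: the endpoint $e=\pi/2$ occurs when $F_0$ is a pole of $\mu$, a case the hyperbola definition excludes.
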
\label{prop:relationconics}
\begin{proof}
	%We abbreviate $\widehat{PQ}=dist(P,Q)$, for any $P,Q\in \s^2$. 
	In order to prove part (1), we have that
	\begin{equation*}
	\widehat{PF_1}+\widehat{P\overline{F_2}}= \widehat{PF_1}+\pi-\widehat{PF_2}=\pi+2d \textrm{ or } \pi -2d,
\end{equation*}
according to the branch of the initial spherical hyperbola. To prove part (2), we reason as follows:
\begin{equation*}
	\widehat{PF_0}+\frac{\pi}{2}=dist(P,\mu)+\frac{\pi}{2}=\widehat{P\widetilde{F_0}} \Rightarrow 	\widehat{PF_0}-\widehat{P\widetilde{F_0}}=\frac{\pi}{2}.
\end{equation*}
\end{proof}

%\begin{figure}[h!] 
%	\centering 
%	\includegraphics[height=5cm]{defparabol.png} 
%	\caption{Vista en picado de la interpretación de la parábola esférica como rama de una hipérbola esférica.}
%	\label{fig:defparabola} 
%\end{figure}

Next, we deduce the equation, which we will call \textit{canonical}, of a spherical conic when we place the foci on the equator of the sphere.
\begin{theorem}\label{Th:eqnsgeo_conics} 
 The canonical equation of a spherical conic  $\mathcal C_{d,e}(F_1,F_2)$ (of parameters $d,e$) in geographical coordinates, i.e.\ $P=(\lambda,\varphi)\in \s^2$, when the foci are equidistant points from the zero-meridian on the equator, i.e.\ $F_1=(-e,0)$, $F_2=(e,0)$, is given by
 	\begin{equation}\label{eq:conic_geo}
 \mathcal C_{d,e} \equiv  \cos^2 \varphi \left( \frac{\cos^2 \lambda}{c_d^2/c_e^2}+\frac{\sin^2 \lambda}{s_d^2/s_e^2} \right)=1, \ 0<d,e<\pi/2, 
 \end{equation}
where $c_d=\cos d, s_d=\sin d, c_e=\cos e, s_e=\sin e \in (0,1)$.

\noindent $\mathcal C_{d,e}$ is a spherical ellipse if $d>e$; $\mathcal C_{d,e}$ is a spherical hyperbola if $d<e$, and  $\mathcal C_{d,e}$ is a spherical parabola if, in addition, $d=\pi/4$.
\end{theorem}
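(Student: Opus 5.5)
The plan is to work with the spherical law of cosines in the ambient $\R^3$. With $P=(\cos\varphi\cos\lambda,\cos\varphi\sin\lambda,\sin\varphi)$ and $F_{1,2}=(c_e,\mp s_e,0)$, we have
\[
\cos\widehat{PF_{1,2}}=\langle P,F_{1,2}\rangle=\cos\varphi\,(c_e\cos\lambda\mp s_e\sin\lambda).
\]
Set $u=\cos\varphi\cos\lambda$ and $v=\cos\varphi\sin\lambda$, so that
\[
\cos r_1=c_e u+s_e v,\qquad \cos r_2=c_e u-s_e v,\qquad r_i=\widehat{PF_i}\in[0,\pi].
\]
For the ellipse we impose $r_1+r_2=2d$. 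For the hyperbola we impose $r_1-r_2=\pm 2d$, and by Proposition \ref{Prop:conics}(1) we treat it together with the ellipse case where possible.

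The key step is to eliminate the distances using sum/difference formulas. Write $\sigma=(r_1+r_2)/2$ and $\delta=(r_1-r_2)/2$. Then
\[
\cos r_1+\cos r_2=2\cos\sigma\cos\delta=2c_e u,\qquad \cos r_2-\cos r_1=2\sin\sigma\sin\delta=-2s_e v.
\]
In the ellipse case $\sigma=d$ is fixed, so $\cos\delta=c_e u/c_d$ and $\sin\delta=-s_e v/s_d$. Using $\cos^2\delta+\sin^2\delta=1$ gives
\[
\frac{c_e^2u^2}{c_d^2}+\frac{s_e^2v^2}{s_d^2}=1,
\]
which is exactly \eqref{eq:conic_geo} once $u,v$ are substituted. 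In the hyperbola case $\delta=\pm d$ is fixed, and symmetrically $\cos\sigma=c_e u/c_d$ and $\sin\sigma=\mp s_e v/s_d$. The same identity results, so both conics share the equation $\mathcal C_{d,e}$ with $0<d,e<\pi/2$. For the ellipse, $d<\pi/2$ is required; the antipodal branch with $\pi-d$ gives the same equation anyway, since only $c_d^2$ and $s_d^2$ enter.

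For the converse inclusion, I will take a point satisfying \eqref{eq:conic_geo} and define an angle $\theta$ through $(\cos\theta,\sin\theta)=(c_eu/c_d,-s_ev/s_d)$. I then need to check that this reconstructs $r_1,r_2$ with $r_1+r_2=2d$ or $|r_1-r_2|=2d$. This is where the main obstacle lies: the $r_i$ must land in $[0,\pi]$, and one must decide which of $\sigma,\delta$ equals $d$. The decision should be made by the sign of $d-e$. If $d>e$, the curve stays away from the points where the hyperbola condition would apply. In the inequalities, note that $|\delta|\le e<d$ holds automatically by the triangle inequality, because $|r_1-r_2|\le 2e$. Likewise, in the hyperbola case, $|r_1-r_2|=2d$ forces $d\le e$. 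This consistency is what shows that $\mathcal C_{d,e}$ is an ellipse exactly when $d>e$ and a hyperbola exactly when $d<e$. The case $d=e$ degenerates to the arc between the foci.

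Finally, the parabola statement follows from Proposition \ref{Prop:conics}(2), which identifies the parabola with a hyperbola branch with $d=\pi/4$. The foci of that hyperbola are $F_0$ and $\widetilde{F_0}$, and after rotating them into the position $(\mp e,0)$ the canonical form above applies with $c_d^2=s_d^2=1/2$.
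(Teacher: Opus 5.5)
Your argument is correct and is essentially the paper's: your inner products $\langle P,F_{1,2}\rangle$ are the paper's spherical-Pythagoras formulas \eqref{eq:Pitagoras}. Your sum-to-product elimination, which adds the companion identity $\cos r_2-\cos r_1=2\sin\sigma\sin\delta$ and then uses $\cos^2+\sin^2=1$, just makes explicit the ``straightforward computation'' the paper leaves after \eqref{eq.cosenos}.

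Two cosmetic points. With $F_{1,2}=(c_e,\mp s_e,0)$ you actually get $\cos r_1=c_eu-s_ev$, which is harmless since $v$ enters squared. The converse inclusion you flag is not proved in the paper either; it only notes in Remark \ref{rm:eqbranches} that the equation also contains the antipodal ellipse $\mathcal E_{\pi-d,e}$, so your $\theta$-reconstruction would also have to allow $r_1+r_2=2\pi-2d$.
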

\begin{remark}\label{rm:eqbranches}
Since $c_{\pi-d}^2=c_d^2$ and $s_{\pi-d}^2=s_d^2$, we point out that \eqref{eq:conic_geo} includes both the spherical ellipse and its antipodal image when $d>e$.  In the same way, taking Proposition \ref{prop:relationconics}(2) into account, \eqref{eq:conic_geo} also includes the two branches of the spherical hyperbola producing the spherical parabola when $\pi/4=d<e$. See Figure \ref{fig:ed}.
\end{remark}
\begin{figure}[h!] 
	\centering
	% Primera columna
	\begin{subfigure}[t]{0.32\textwidth}
		\centering
		\includegraphics[width=\textwidth]{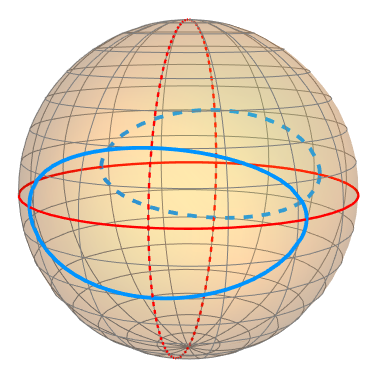}
		\caption{Ellipse, $d>e$.}
		\label{fig:elipseedfocos}
	\end{subfigure}
	\hfill
	% Segunda columna
	\begin{subfigure}[t]{0.32\textwidth}
		\centering
		\includegraphics[width=\textwidth]{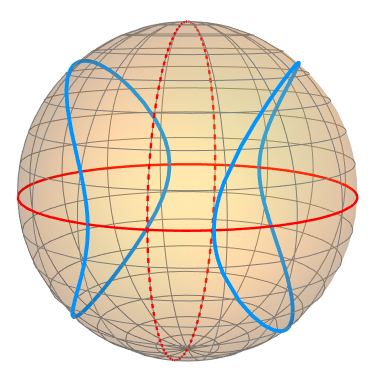}
		\caption{Hyperbola, $d<e$.}
		\label{fig:hiperbedfocos}
	\end{subfigure}
	\hfill
	% Tercera columna
	\begin{subfigure}[t]{0.32\textwidth}
		\centering
		\includegraphics[width=\textwidth]{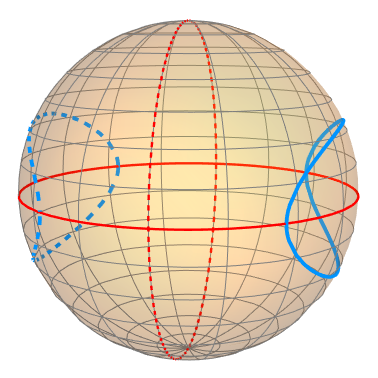}
		\caption{Parabola, $\pi/4=d<e$.}
		\label{fig:ecuadored}
	\end{subfigure}
	\caption{Spherical conics with foci on the equator in terms of the parameters $d$ and $e$.}
	\label{fig:ed}
\end{figure}

\begin{proof}
Let $l_1=\widehat{PF_1}$ and $l_2=\widehat{PF_2}$. Then $l_1+l_2=2d$, $d>e$, for a spherical ellipse, and 
$|l_1-l_2|=2d$, $d<e$, for a spherical hyperbola.

Applying twice spherical Pythagorean Theorem (see Figure \ref{fig:eqconics}), we deduce
\begin{equation}\label{eq:Pitagoras}
	\cos l_1= \cos(\lambda + e)\cos \varphi, \quad \cos l_2 = \cos (\lambda -e)\cos \varphi.
\end{equation}
\begin{figure}[h!] 
	\centering 
	\includegraphics[height=5cm]{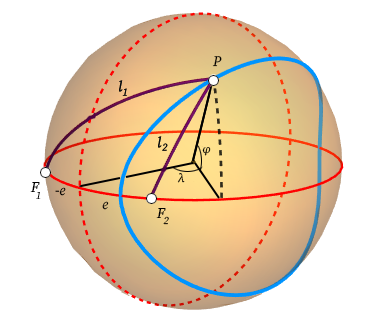} 
	\caption{Spherical conic with foci on the equator.}
	\label{fig:eqconics} 
\end{figure}

Using \eqref{eq:Pitagoras} and trigonometric formulas, we obtain:
\begin{equation}\label{eq.cosenos}
	\cos \left( \frac{l_1+l_2}{2} \right) 	\cos \left( \frac{l_1-l_2}{2} \right) = \frac{1}{2} \left( \cos \l_1 + \cos l_2 \right) =  \cos \lambda \cos \varphi \cos e.
\end{equation}
Then we have for a spherical ellipse that
\begin{equation}\label{eq:flaelipse}
	\cos d \cos (d-l_2)= \cos \lambda \cos \varphi \cos e, 
\end{equation}
and for a spherical hyperbola that
\begin{equation}\label{eq:flahiperbola}
	\cos (d\pm l_2) \cos d= \cos \lambda \cos \varphi \cos e.
\end{equation}
After manipulating \eqref{eq:flaelipse} and \eqref{eq:flahiperbola} and a straightforward computation using trigonometric identities, we finally reach \eqref{eq:conic_geo} in both cases. The case of a spherical parabola is included in the hyperbola case simply taking the particular value $d=\pi/4$.
\end{proof}

We point out that when $d=e$ in \eqref{eq:conic_geo}, we simply arrive at the geographical equation of the equator.

\begin{remark}\label{rm:conic_cart}
	Looking at \eqref{eq:conic_geo}, it is clear that the canonical equation of a spherical conic $\mathcal C_{d,e}$ (of parameters $d,e$) in Cartesian coordinates, i.e.\ $P=(x,y,z)\in \s^2$, when the foci are equidistant points from the zero-meridian on the equator, i.e.\ $F_1=(c_e,-s_e,0)$, $F_2=(c_e,s_e,0)$, is given by
	\begin{equation}\label{eq:conic_cartxy}
		\mathcal C_{d,e} \equiv  \frac{x^2}{c_d^2/c_e^2}+\frac{y^2}{s_d^2/s_e^2}=1, \ 0<d,e<\pi/2.
	\end{equation}
	And when the foci are equidistant points from the equator on the zero-meridian , i.e.\ $F_1=(c_e,0,-s_e)$, $F_2=(c_e,0,s_e)$, is given by
	\begin{equation}\label{eq:conic_cartxz}
		\mathcal C_{d,e} \equiv  \frac{x^2}{c_d^2/c_e^2}+\frac{z^2}{s_d^2/s_e^2}=1, \ 0<d,e<\pi/2.
	\end{equation}
Using the monotonicity  of the functions $\cos^2$ and $\sin^2$ in the interval $(0,\pi/2)$,
we have that either $c_d^2/c_e^2<1<s_d^2/s_e^2$ if, and only if, $d>e$, i.e. $	\mathcal C_{d,e}$ is an ellipse, or $c_d^2/c_e^2>1>s_d^2/s_e^2$ if, and only if, $d<e$, i.e.  $	\mathcal C_{d,e}$ is a hyperbola. If, in addition $d=\pi/4<e$, we get $c_e^2<1/2<s_e^2$ for a parabola. 
\end{remark}

\subsection{Spherical conics as intersection with elliptic cylinders} \label{Sect:ConicsIntersection}
Our next objective is to calculate the spherical angular momentum of a spherical conic in the canonical positions described in Remark \ref{rm:conic_cart} with respect to the equator $\xi_0$. Recall that, to do this, we are free to rotate the conic around the z-axis at our convenience (see Section \ref{Sect:KCurvesS2}).

Canonical equations \eqref{eq:conic_cartxy} and \eqref{eq:conic_cartxz} suggest a new analytical treatment of spherical conics. Specifically, we will study them in this section as intersections of the sphere $\s^2$ with two types of elliptic cylinders, that we call \textit{vertical} and \textit{horizontal} cylinders. The corresponding conics thus obtained which will be denoted by $\mathcal{CV}_{AB}$ and $\mathcal{CH}_{CD}$ respectively: 
	\begin{align}
	\mathcal{CV}_{AB}\equiv \frac{x^2}{A^2}+\frac{y^2}{B^2}=1, \ A,B > 0,
	\label{eq:cilindrosvert}
	\\
	\mathcal{CH}_{CD}\equiv \frac{x^2}{C^2}+\frac{z^2}{D^2}=1, \ C,D>0. 
	\label{eq:cilindroshoriz}
\end{align}
%\begin{figure}[h!]
%		\centering 
%		\begin{subfigure}[t]{0.37\textwidth}
%				\includegraphics[width=1\textwidth]{CV1.png} 
%				\label{fig:CV1}
%			\end{subfigure}
%		\hspace*{2cm}
%		\begin{subfigure}[t]{0.45\textwidth}
%				\includegraphics[width=1\textwidth]{CH1.png} 
%				\label{fig:CH1}
%			\end{subfigure}
%		\caption{Cónicas generadas por cilindros.}
%\end{figure}
Parameters $A$ and $B$ in \eqref{eq:cilindrosvert} (resp.\ $C$ and $D$ in \eqref{eq:cilindroshoriz}) are the lengths of the semi-axes of the base ellipse of the corresponding cylinder. Therefore, it is obvious that we will require $A\leq 1$ or $B\leq 1$ (resp.\ $C\leq 1$ or $D\leq 1$) as we are intersecting them with the unit sphere. 
\begin{remark}\label{rm:cone}
Using $x^2+y^2+z^2=1$ in \eqref{eq:cilindrosvert} and  \eqref{eq:cilindroshoriz}, we have that $\mathcal{CV}_{AB}$ and $\mathcal{CH}_{CD}$ can be also given as the intersection of $\s^2$ with an elliptic cone or a hyperbolic cylinder.
\end{remark}
On the other hand, it is easy to check that $\mathcal{CV}_{AA}$ produces $\xi_{\arcsin \sqrt{1-A^2}}$ and $\xi_{-\arcsin \sqrt{1-A^2}}$ (see Example \ref{ex:parallel}) and $\mathcal{CH}_{CC}$ gives $\eta_{\arctanh \sqrt{1-C^2}}$ and $\eta_{-\arctanh \sqrt{1-C^2}}$ (see Example \ref{ex:small circle}). On the other hand, recalling Example \ref{ex:great circle}, if $A=1>B$ (resp.\ $B=1>A$) we get the great circles $\mu_{\arcsin \sqrt{1-B^2}}$ and $\mu_{\pi-\arcsin \sqrt{1-B^2}}$ (resp.\ a $\pi/2$-rotation around $z$-axis of the great circles $\mu_{\arcsin \sqrt{1-A^2}}$ and $\mu_{\pi-\arcsin \sqrt{1-A^2}}$); if $C=1>D$ (resp. $D=1>C$) we obtain the great circles $\mu_{\arcsin D}$ and $\mu_{\pi-\arcsin D}$ (resp.\ the meridians $\lambda = \pm \arcsin\sqrt{1-C^2}$). 
See Figure \ref{fig:oldCircles}.
\begin{figure}[h!]
		\centering 
		\includegraphics[height=4.8cm]{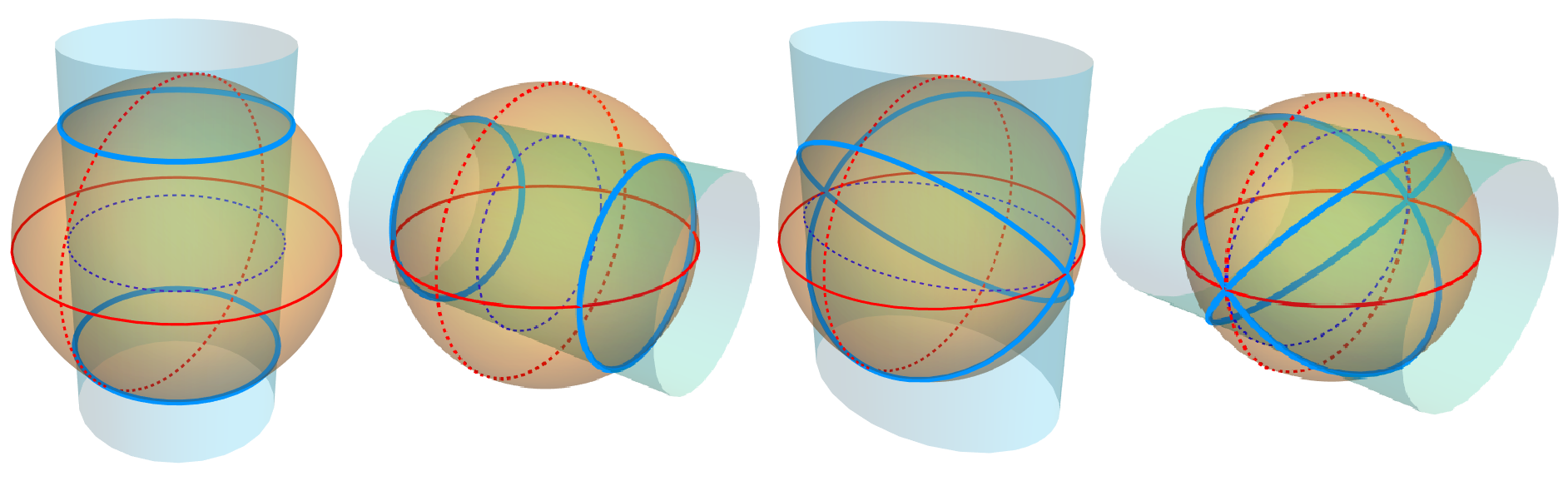} 
		\caption{Curves $\xi_{\varphi_0}$ of Example \ref{ex:parallel}, $\eta_\delta$ of Example \ref{ex:small circle}, and $\mu_\theta$ of Example \ref{ex:great circle}, as intersection with vertical and horizontal  cylinders.}
		\label{fig:oldCircles}
\end{figure}

Therefore, from now on, we will assume that $\min \{A,B\}<1$, $A\neq B$, $A\neq 1$, $B\neq 1$, and $\min\{C,D\}<1$, $C\neq D$, $C\neq 1$, $D\neq 1$, in order to study what we will call \textit{non-degenerate spherical conics}.

%To interpret vertical-type I and horizontal-type II cases geometrically, 
We should also note that some spherical conics can be given simultaneously as the intersection of the sphere with both vertical and horizontal cylinders, see Figure \ref{fig:both}. 
\begin{figure}[h!]
	\centering 
	\begin{subfigure}[t]{0.4\textwidth}
		\includegraphics[width=\textwidth]{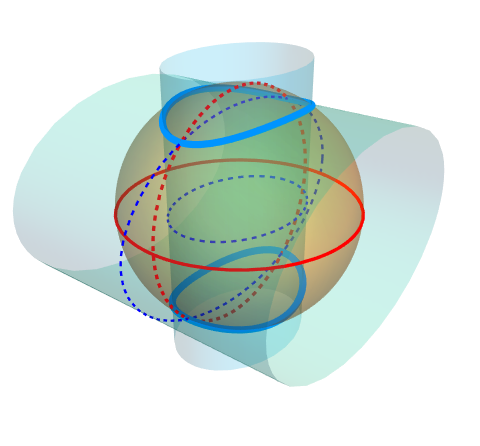} 
		\caption{$\mathcal{CV}_{AB}\equiv \mathcal{CH}_{CD}$}
	\end{subfigure}
	\begin{subfigure}[t]{0.4\textwidth}
		\includegraphics[width=\textwidth]{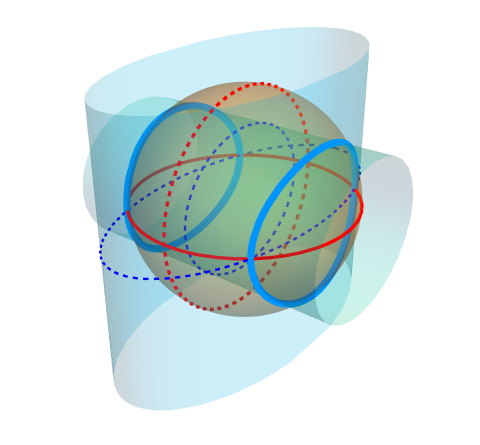} 
		\caption{$\mathcal{CH}_{CD}\equiv \mathcal{CV}_{AB}$}
	\end{subfigure}
	\caption{Spherical conics as the intersection of the sphere with both vertical and horizontal cylinders.}
	\label{fig:both}
\end{figure}

We analyze this situation using $x^2+y^2+z^2=1$ in \eqref{eq:cilindrosvert} and \eqref{eq:cilindroshoriz}. A simple computation gives that
\begin{equation}
	\label{eq:CDenfunciondeAB}
	\mathcal{CV}_{AB}\equiv \mathcal{CH}_{CD}, \	\text{ with } C^2=\dfrac{A^2(1-B^2)}{A^2-B^2},\ D^2=1-B^2, \text{ when } B<A, B<1,
\end{equation} 
and conversely
\begin{equation}
	\label{eq:ABenfunciondeCD}
	\mathcal{CH}_{CD}\equiv \mathcal{CV}_{AB}, \text{ with }	A^2=\dfrac{C^2(1-D^2)}{C^2-D^2},\ B^2=1-D^2, \text{ when } D<C, D<1.
\end{equation}

Now we are ready to identify the three essential types of non-degenerate spherical conics (in canonical positions) according to the entire possible range of variation of the parameters $A$ and $B$ in $\mathcal{CV}_{AB}$ and $C$ and $D$ in $\mathcal{CH}_{CD}$.
To do that, we will use \eqref{eq:CDenfunciondeAB} and \eqref{eq:ABenfunciondeCD}, a $\pi/2$-rotation around $z$-axis if necessary, and Remark \ref{rm:conic_cart} putting $A=c_d/c_e$, $B=s_d/s_e$ (resp.\ $C=c_d/c_e$, $D=s_d/s_e$).
\begin{enumerate}
	\item \textbf{Type I:} The spherical conics $\mathcal{CV}_{AB}^I\equiv A,B<1$, $A>B$ without restriction, are the same that $ \mathcal{CH}_{CD}^I \equiv D<1<C$, see Figure \ref{Fig:TypeI}. Therefore they are 
\textit{spherical hyperbolas,  whose foci are equidistant points from the equator on the zero-meridian} and whose parameters are given by
	\begin{align*}
		d=\arccos\left(C\sqrt{\dfrac{1-D^2}{C^2-D^2}}\right)=\arccos A, \quad 
		e=\arccos\left(\sqrt{\dfrac{1-D^2}{C^2-D^2}}\right)=\arccos \left( \sqrt{\dfrac{A^2-B^2}{1-B^2}}\right).
	\end{align*}
	We remark that the spherical parabolas appearing when $d=\pi/4$ correspond to the condition $C^2+D^2=2\, C^2D^2$ or, equivalently, $A^2=1/2>B^2$.
	
	\begin{figure}[h!]
	\centering % Centra la imagen
	\begin{subfigure}[t]{0.26\textwidth}
		\includegraphics[width=1\textwidth]{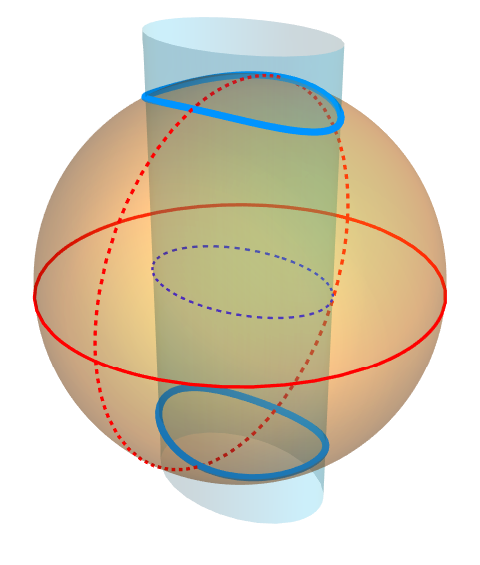} 
		\caption*{$A<B<1$}
		\label{fig:CV11}
	\end{subfigure}
	\hspace{0.5cm}
	\begin{subfigure}[t]{0.26\textwidth}
		\includegraphics[width=1\textwidth]{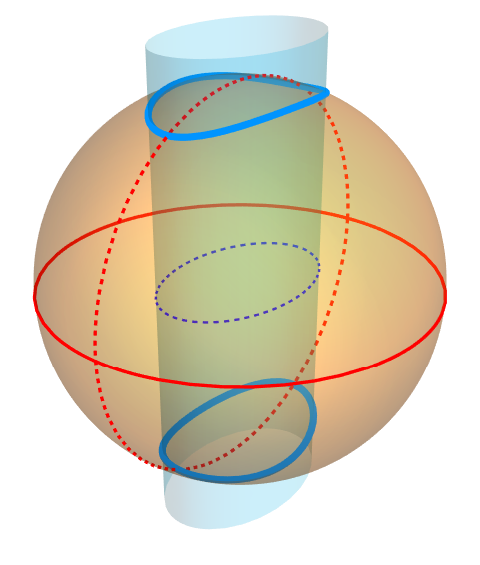} 
		\caption*{$1>A>B$}
		\label{fig:CV12}
	\end{subfigure}
	\begin{subfigure}[t]{0.42\textwidth}
		\includegraphics[width=1\textwidth]{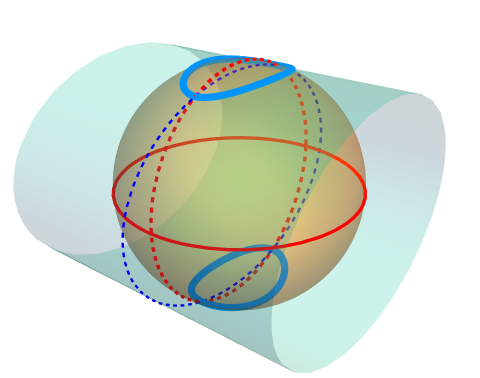} 
		\caption*{$D<1<C$}
		\label{fig:CH11}
	\end{subfigure}
	\caption{Type I non-degenerate spherical conics.}
	\label{Fig:TypeI}
\end{figure}
	
	\item \textbf{Type II:} The spherical conics $ \mathcal{CH}_{CD}^{II}\equiv D<C<1$ are the 
	same that $\mathcal{CV}_{AB}^{II} \equiv B <1<A$. Therefore they are \textit{spherical hyperbolas
	whose foci are equidistant points from the zero-meridian on the equator} and whose parameters are given by
	\begin{align*}
		d=\arccos\left(A\sqrt{\dfrac{1-B^2}{A^2-B^2}}\right)=\arccos C, \quad 
		e=\arccos\left(\sqrt{\dfrac{1-B^2}{A^2-B^2}}\right)=\arccos\left(\sqrt{\dfrac{C^2-D^2}{1-D^2}}\right).
	\end{align*}
	Recall that the spherical parabolas appearing when $d=\pi/4$ correspond to the condition $A^2+B^2=2A^2B^2$ or, equivalently $C^2=1/2>D^2$.
	We point out that if $A<1<B$ we get \textit{spherical ellipses whose foci are equidistant points from the zero-meridian on the equator}, but a $\pi/2$-rotation around $z$-axis leads the previous case. See Figure \ref{Fig:TypeII}.
	
\begin{figure}[h!]
	\centering % Centra la imagen
	\begin{subfigure}[t]{0.32\textwidth}
		\includegraphics[width=1\textwidth]{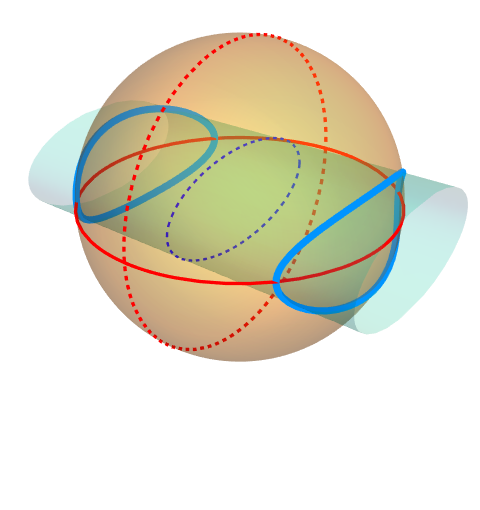} 
		\caption*{$D<C<1$.}
		\label{fig:CH21}
	\end{subfigure}
	\begin{subfigure}[t]{0.3\textwidth}
		\includegraphics[width=1\textwidth]{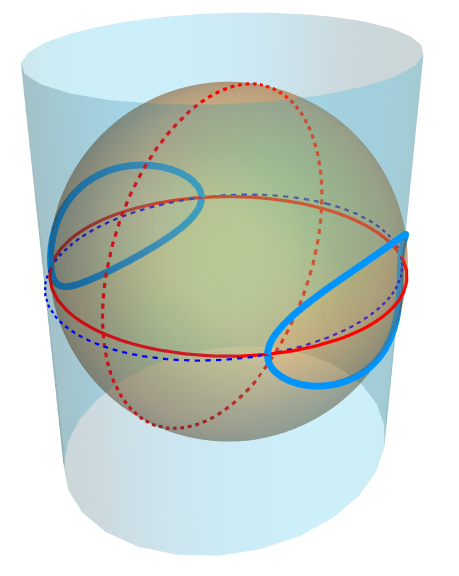} 
		\caption*{$B<1<A$.}
		\label{fig:CV21}
	\end{subfigure}
	\begin{subfigure}[t]{0.3\textwidth}
		\includegraphics[width=1\textwidth]{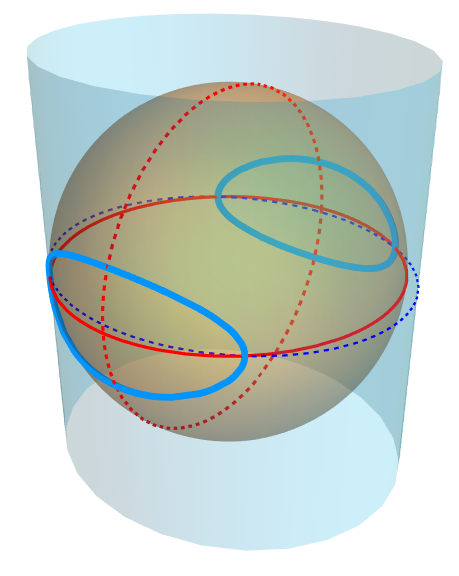} 
		\caption*{$A<1<B$.}
		\label{fig:CV22}
	\end{subfigure}
	\caption{Type II non-degenerate spherical conics.}
	\label{Fig:TypeII}
\end{figure}

	\item \textbf{Type III:} The spherical conics $ \mathcal{CH}_{CD}^{III}\equiv C<1<D$
	are \textit{spherical ellipses whose foci are equidistant points from the equator on the zero-meridian} and whose parameters are given by
	\begin{align*}
		d=\arccos\left(C\sqrt{\dfrac{D^2-1}{D^2-C^2}}\right), \quad
		e=\arccos\left(\sqrt{\dfrac{D^2-1}{D^2-C^2}}\right).
	\end{align*}
If $C<D<1$, it is easy to check that a $\pi/2$-rotation around $z$-axis brings us back to the previous case. See Figure \ref{Fig:TypeIII}.

\begin{figure}[h!]
	\centering % Centra la imagen
	\begin{subfigure}[t]{0.36\textwidth}
		\includegraphics[width=1\textwidth]{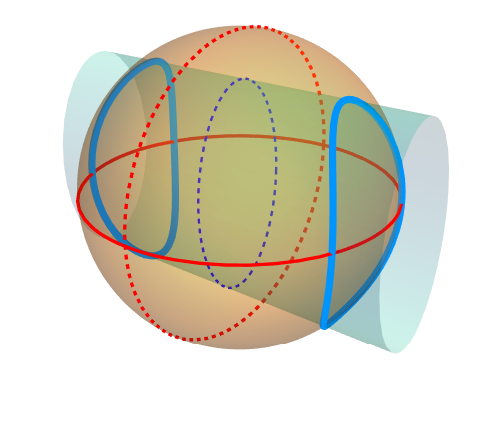} 
		\caption*{$C<D<1$.}
		\label{fig:CH31}
	\end{subfigure}
	\begin{subfigure}[t]{0.4\textwidth}
		\includegraphics[width=1\textwidth]{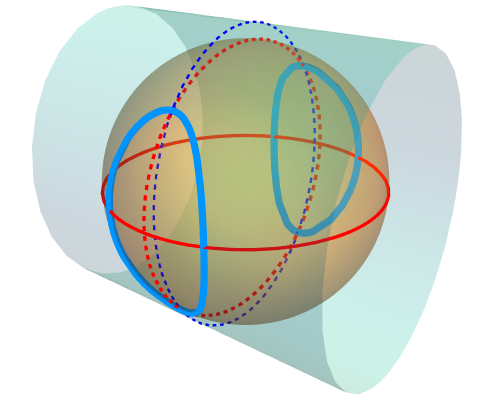} 
		\caption*{$C<1<D$.}
		\label{fig:CH32}
	\end{subfigure}
		\caption{Type III non-degenerate spherical conics.}
		\label{Fig:TypeIII}
\end{figure}
\end{enumerate}

\begin{remark}\label{rm:Perseo}
It is interesting to note that when projecting the non-degenerate spherical conics stereographically from the North Pole $(0,0,1)$ to the $xy$-plane, we find the quartics 
$(x^2+y^2)^2-2a  x^2-2b  y^2+1=0$,  with $a=\frac{C^2+C^2 D^2-2D^2}{C^2(1-D^2)}$ and $b=\frac{1+D^2}{1-D^2}$.
 They are bicircular quartics with a symmetry center, known as \textit{spiric curves of Perseus} in the literature, cf.\ \cite{F93}. Historically, these curves were defined as the sections of a (complex) torus by a plane parallel to its axis. See Figure \ref{Fig:Perseo}.
\end{remark}

	\begin{figure}[h!]
	\centering % Centra la imagen
	\begin{subfigure}[t]{0.31\textwidth}
		\includegraphics[width=1\textwidth]{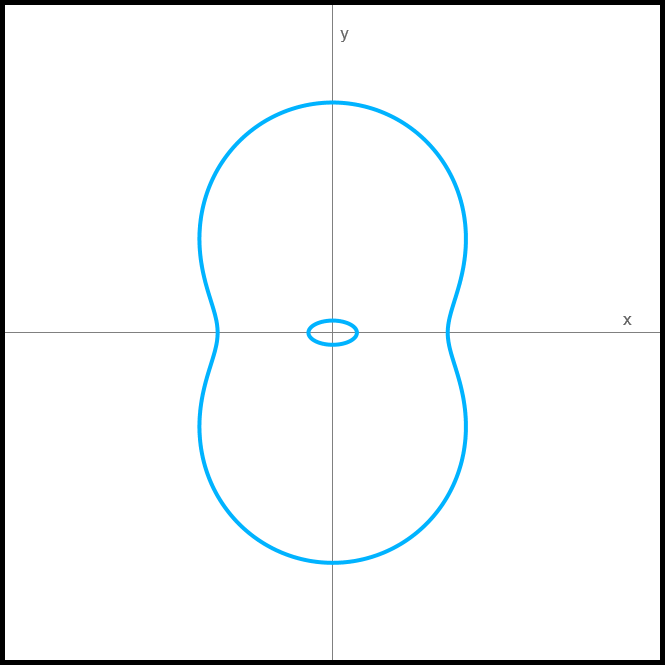} 
		\caption*{Type I.}
		
	\end{subfigure}
	\hspace*{0.1cm}
	\begin{subfigure}[t]{0.31\textwidth}
		\includegraphics[width=1\textwidth]{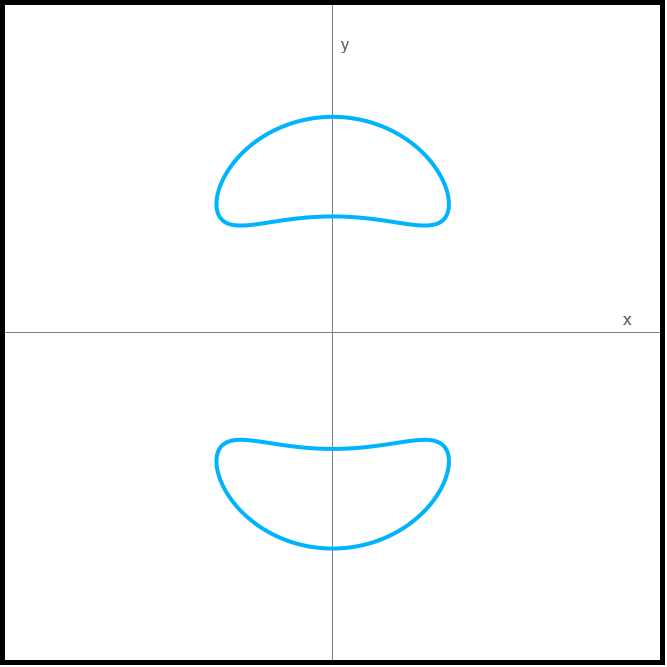} 
		\caption*{Type II.}
		
	\end{subfigure}
	\hspace*{0.1cm}
	\begin{subfigure}[t]{0.31\textwidth}
		\includegraphics[width=1\textwidth]{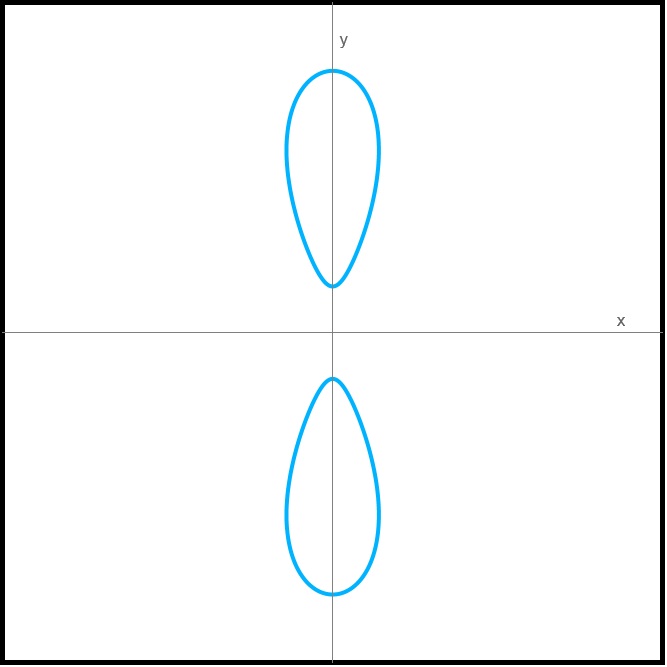} 
		\caption*{Type III.}
		
	\end{subfigure}
	
	\caption{Spiric curves of Perseus}
	\label{Fig:Perseo}
\end{figure}

We now proceed to calculate the spherical angular momentum of a non-degenerate spherical conic (in canonical position) in terms of its distance to the equator, finding out a common expression depending on two real parameters and distinguishing the three aforementioned types by the range of these constants.

\begin{proposition}
	\label{Prop:momentoangularesfericoABCD}
	
	The spherical angular momentum of a non-degenerate spherical conic $\mathcal C =(x,y,z)$ of any type in canonical position can be written as
	\begin{equation}\label{eq:K2muc}
		\K(z)^2=\dfrac{z^2}{\mu+c z^2}, 
	\end{equation}
	where $\mu \neq 0$ and $c \neq 0$ are given by 
\begin{equation}\label{eq:mucAB}
	\mu=\mu(A,B)=\dfrac{(A^2-1)(1-B^2)}{A^2B^2}, \ c=c(A,B)=\dfrac{1}{A^2B^2}, {\text \ for \ } \mathcal{CV}_{AB},
\end{equation}
and
	\begin{equation}\label{eq:mucCD}
	\mu=\mu(C,D)=\dfrac{D^4(1-C^2)}{C^2(1-D^2)^2}, \ c=c(C,D)=\dfrac{C^2-D^2}{C^2(1-D^2)^2}, {\text \ for \ } \mathcal{CH}_{CD}.
\end{equation}
	
	Moreover (see Figure \ref{Fig:moduli}),
	\begin{enumerate}
		\item for the spherical hyperbolae of type I: $\mu<0$, $\mu+c>1$, $\left(\mu-c+1\right)^2+4c\mu>0$; if, in addition, it is a spherical parabola, then also $2\mu+c=2$, $c>4$;
		\item for the spherical hyperbolae and ellipses of type II: $\mu>0$, $c>0$; if, in addition, it is a spherical parabola, then also $\mu+c=1$;
		\item for the spherical ellipses of type III: $\mu>0$, $c<0$.
	\end{enumerate}
\end{proposition}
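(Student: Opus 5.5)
The plan is a direct computation with an adapted parametrization of the elliptic cylinder. Since Section \ref{Sect:KCurvesS2} notes that $\mathcal K$ does not depend on parametrization up to sign, we may parametrize $\mathcal{CV}_{AB}$ by $x=A\cos u$, $y=B\sin u$, with $z^2=1-A^2\cos^2u-B^2\sin^2u$.

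\textbf{Step 1: reduce to $(ds/du)^2$.} By \eqref{eq:spherical momentum},
$$\mathcal K=\frac{y\,dx-x\,dy}{ds}=-AB\,\frac{du}{ds}, \qquad\text{so}\qquad \mathcal K^2=\frac{A^2B^2}{(ds/du)^2}.$$
Differentiating the expression for $z^2$ gives $z\,dz=(A^2-B^2)\sin u\cos u\,du$. Hence
$$\Big(\frac{ds}{du}\Big)^2=A^2\sin^2u+B^2\cos^2u+\frac{(A^2-B^2)^2\sin^2u\cos^2u}{z^2}.$$

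\textbf{Step 2: eliminate $u$ in favour of $z$.} Put $w=\sin^2u$, $a=A^2-1$ and $b=1-B^2$. Then $z^2=a+(A^2-B^2)w$, which gives
$$w=\frac{z^2+a}{A^2-B^2}, \qquad 1-w=\frac{b-z^2}{A^2-B^2}.$$
Substituting, the first two terms add up to $z^2+A^2+B^2-1$. The last term equals $(z^2+a)(b-z^2)/z^2=-z^2+(b-a)+ab/z^2$. Since $b-a=2-A^2-B^2$, everything collapses to
$$\Big(\frac{ds}{du}\Big)^2=\frac{z^2+ab}{z^2}.$$
Therefore
$$\mathcal K^2=\frac{A^2B^2z^2}{z^2+(A^2-1)(1-B^2)},$$
which is \eqref{eq:K2muc} with $\mu,c$ as in \eqref{eq:mucAB}. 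The hypotheses $A,B\neq 1$ give $\mu\neq0$, and clearly $c\neq 0$.

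\textbf{Step 3: the horizontal cylinders.} For $\mathcal{CH}_{CD}$ I would first try to avoid redoing the computation. When $D<C$ and $D<1$, I would substitute \eqref{eq:ABenfunciondeCD} into \eqref{eq:mucAB} and simplify to get \eqref{eq:mucCD}. A check: $1-B^2=D^2$, $A^2-1=D^2(1-C^2)/(C^2-D^2)$, and $A^2B^2=C^2(1-D^2)^2/(C^2-D^2)$. This yields exactly $\mu=D^4(1-C^2)/(C^2(1-D^2)^2)$ and $c=(C^2-D^2)/(C^2(1-D^2)^2)$.

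For type III ($C<1<D$) this identification with a vertical cylinder is unavailable. There I would repeat the same computation with $x=C\cos u$ and $z=D\sin u$. Alternatively, I would observe that the final formula is a rational identity in $C^2,D^2$ valid on an open set, and the same algebra goes through verbatim. I expect this type III case, and keeping track of which parametrization is legitimate, to be the main technical nuisance.

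\textbf{Step 4: ranges of $(\mu,c)$.} These follow by inspecting signs in each type.
\begin{itemize}
\item Type I ($A,B<1$, $A>B$). Here $\mu<0$, and $\mu+c=(A^2+B^2-A^2B^2)/(A^2B^2)>1$ because $(1-A^2)(1-B^2)>0$. The discriminant-type inequality $(\mu-c+1)^2+4c\mu>0$ becomes, after clearing denominators, a positive multiple of $(A^2-B^2)^2$.
\item Type II ($B<1<A$). Here $\mu>0$ and $c>0$.
\item Type III. Formula \eqref{eq:mucCD} gives $\mu>0$ and $c<0$, since $C<D$.
\end{itemize}
For the parabola conditions I would use $A^2=1/2$ in type I, which gives $2\mu+c=2$ and $c=2/B^2>4$. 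In type II I would use $C^2=1/2$ in \eqref{eq:mucCD}, which gives $\mu+c=1$.
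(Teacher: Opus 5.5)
Your proposal is correct and follows essentially the paper's route. You use the same parametrization $(A\cos u,B\sin u,z)$ of $\mathcal{CV}_{AB}$, eliminate $u$ through $\sin^2u$ (the paper uses $\cos^2t$) as a function of $z^2$, and check the signs of $\mu,c$ type by type; the only cosmetic differences are that you obtain \eqref{eq:mucCD} for $D<C$, $D<1$ by substituting \eqref{eq:ABenfunciondeCD} (the exercise of Remark~\ref{rm:K2}), and you check type I in the $(A,B)$ chart, where the discriminant becomes $(A^2-B^2)^2/(A^4B^4)$. Fix three small slips: in Step 2 the correct relation is $z^2=-a+(A^2-B^2)w$ (your formula for $w$ is nevertheless right); in type I the reason for $\mu+c>1$ is $\mu+c-1=\bigl(A^2(1-B^2)+B^2(1-A^2)\bigr)/(A^2B^2)>0$, not the positivity of $(1-A^2)(1-B^2)$; and drop the ``rational identity on an open set'' shortcut for type III, because the direct computation with $x=C\cos u$, $z=D\sin u$ (which gives $y\,dx-x\,dy=-C(1-D^2)\sin u\,du/y$) settles all $C,D$ at once.
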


\begin{figure}[h!]
	\centering % Centra la imagen
	\includegraphics[height=7.5cm]{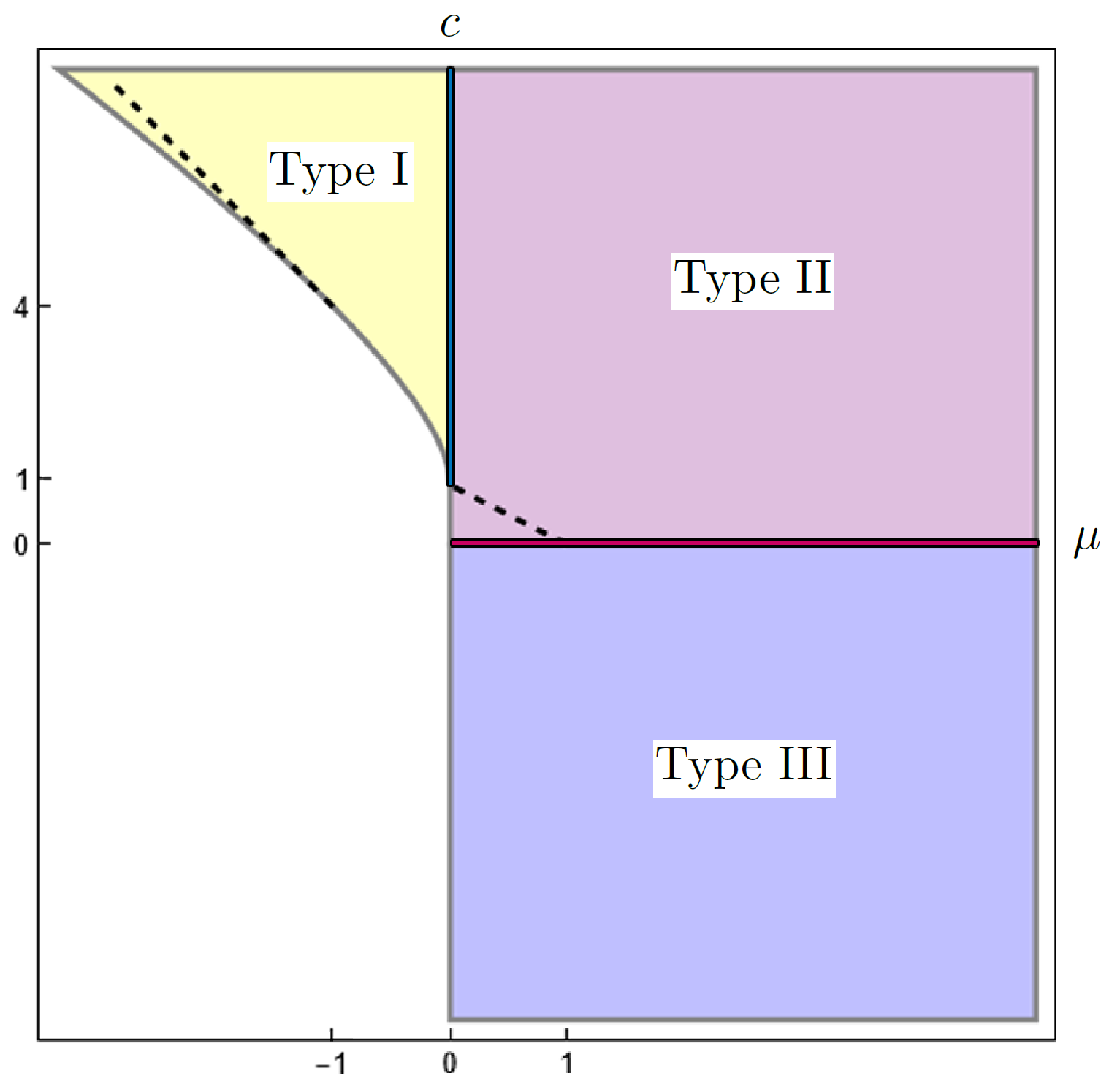} 
	\caption{Regions in  $\mu c$-plane corresponding to the three types of non-degenerate spherical conics. The dashed lines highlight spherical parabolas in the first two types.}
	\label{Fig:moduli}
\end{figure}

\begin{remark}\label{rm:K2}
	Since $\mathcal K$ is well defined up to sign (see Section \ref{Sect:KCurvesS2}), we have preferred to use the expression of $\mathcal K^2 $ in \eqref{eq:K2muc}. Moreover, it is an exercise to check the compatibility of \eqref{eq:mucAB} and \eqref{eq:mucCD} with \eqref{eq:CDenfunciondeAB} and \eqref{eq:ABenfunciondeCD}. 
\end{remark}

\begin{remark}\label{rm:casos0}
Using Example \ref{rm:Kcircles}, if $\mu =0$ and $c>1$ in \eqref{eq:K2muc} we recover the spherical angular momentum of the great circles $\mu_\theta$, $\theta \neq \pi/2$,  and if $c=0$ (and so $\mu >0$) in \eqref{eq:K2muc} we arrive at the spherical angular momentum of the small circles $\eta_\delta$, $\delta \neq 0$.
\end{remark}

\begin{proof} We choose the following parametrization for the spherical conic $\mathcal{CV}_{AB} $:
	\begin{equation*}
		\mathcal{CV}_{AB}(t)=\left(A\cos t,B\sin t, \sqrt{1-A^2\cos^2t-B^2\sin^2t}\right), \ t\in I \subseteq (-\pi,\pi].
	\end{equation*} %$A,B\in\R^+$ y $\min\{A,B\}<1$
Using that 
\begin{align*}
	\cos^2t=\dfrac{z^2-1+B^2}{B^2-A^2},\
	%\sin^2t=1-\dfrac{z^2-1+B^2}{B^2-A^2},
\end{align*}
a straightforward computation gives that 
\begin{align*}
	\K(t)=\frac{x'(t)y(t)-x(t)y'(t)}{\sqrt{x'(t)^2+y'(t)^2+z'(t)^2}}=\frac{-AB}{\sqrt{1+\frac{(A^2-1)(1-B^2)}{z^2}}} 
\end{align*}
and \eqref{eq:K2muc} follows with $\mu $ and $c$ given in \eqref{eq:mucAB}.

A similar computation can be done for the spherical conic $\mathcal{CH}_{CD} $ parameterized by
\begin{equation*} \mathcal{CH}_{CD}(t)=\left(C\cos t, \sqrt{1-C^2\cos^2t-D^2\sin^2t},D\sin t\right), \ t\in I \subseteq (-\pi,\pi],
\end{equation*} %$C,D\in\R^+$ y $\min\{C,D\}<1$
reaching \eqref{eq:K2muc}  with $\mu $ and $c$ given in \eqref{eq:mucCD}.
	
In order to prove the restrictions on parameters $\mu $ and $c$, first we can consider $ \mathcal{CH}_{CD}^I $, $D<1<C$, for type I spherical conics, and so formulas \eqref{eq:mucCD}. It is clear that $\mu <0$ since $C>1$. It is easy to check that $\mu +c >1/(1-D^2)$ and so $\mu + c >1$ since $D<1$. Finally, after a long computation, we get that $\left(\mu-c+1\right)^2+4c\mu=\frac{D^4}{C^4(1-D^2)^2}>0$. Moreover, using that $	C^2+D^2=2\, C^2D^2$ or $A^2=1/2>B^2$ for a spherical parabola, we get that $2\mu+c=2$. In this case, $c=2/(1-D^2)=2/B^2>4$.

Now we can take $ \mathcal{CV}_{AB}^{II} $, $B<1<A$, for type II spherical conics, and consider formulas \eqref{eq:mucAB}. It is obvious that $c>0$ and $\mu>0$ since $B<1<A$. Observe that the same happens if $A<1<B$. Using that $A^2+B^2=2\,A^2B^2$ or $C^2=1/2$ for a spherical parabola, it is easy to obtain $\mu+c=1$.

For type III spherical conics, we make use of $ \mathcal{CH}_{CD}^{III} $, $C<1<D$, and \eqref{eq:mucCD}. Then $\mu >0$ since $C<1$ and $c<0$ since $C<D$.	
\end{proof}

%--> Graphical summary
%
%	\begin{figure}[h!]
%	\centering % Centra la imagen
%	\includegraphics[width=0.7\textwidth]{clasifconicascurvasv2.png} % Ruta y tamaño
%	\caption{Curvas esféricas con momento angular $\K(z)=-\dfrac{z}{\sqrt{c z^2+\mu}}$,  $\mu,c\in\R$.}
%	\label{fig:clasifconicascurvas}
%\end{figure}

%\vspace{0.2cm}

%%%%%%%%%%%%%%%%%%%%%%%%%%%%%%%%%%%%%%%%%%%%%%%%%%%%%%%%%%%%%%%%%%%%%%%%%%%%%%%%%%%%%%%%%%%%

\section{Quadric surfaces of revolution in $\s^3$} \label{Sect:Quadrics}

Once the non-degenerate spherical conics have been determined in Section \ref{Sect:ConicsIntersection}, we proceed to introduce the natural notion of the \textit{non-degenerate quadric surfaces of revolution} in $\s^3$ by rotating them around the equator. Using  \eqref{eq:cilindrosvert}, \eqref{eq:cilindroshoriz}, \eqref{eq:CDenfunciondeAB} and \eqref{eq:ABenfunciondeCD}, we also include their implicit equations, which are completely similar to the implicit equations of the corresponding quadric surfaces of revolution in Euclidean 3-space (see e.g.\ \cite[Section 7.1]{CCC26}). 

\begin{definition}\label{def:quadric} %\phantom{-}
%The three types of non-degenerate quadric surfaces of revolution in $\s^3$ are:
\phantom{salto}
\begin{enumerate}
	\item[(I)] \textit{One-piece spherical hyperboloids of revolution}: $ S_{\mathcal{CH}_{CD}^I} $, i.e.\ $D<1<C$, or $ S_{\mathcal{CV}_{AB}^I} $, i.e.\ $B<A<1$,
	whose implicit equation is $x_3^2+x_4^2=\alpha^2+\beta^2\, x_2^2$,
	%$$ S_{\mathcal{CH}_{CD}^I}\equiv x_3^2+x_4^2=\alpha^2+\beta^2\, x_2^2, $$ 
	where 
	$$\alpha^2=\frac{D^2(C^2-1)}{C^2-D^2}=1-A^2, \  \beta^2=\frac{D^2}{C^2-D^2}=\frac{A^2-B^2}{B^2}.$$ 
	
	\noindent In particular,  \textit{spherical genus one paraboloids of revolution} appear when $C^2+D^2=2\,C^2D^2$ or $A^2=1/2>B^2$. In this case, $\alpha^2 =1/2$. See Figures \ref{fig:hiperboloideunapieza} and \ref{fig:paraboloideunapieza}.
	
	\begin{figure}[h!]
		\centering % Centra la imagen
		\begin{subfigure}[t]{0.3\textwidth} % La opción [H] fija la imagen en esta posición exacta
			\centering % Centra la imagen
			\includegraphics[width=0.99\textwidth]{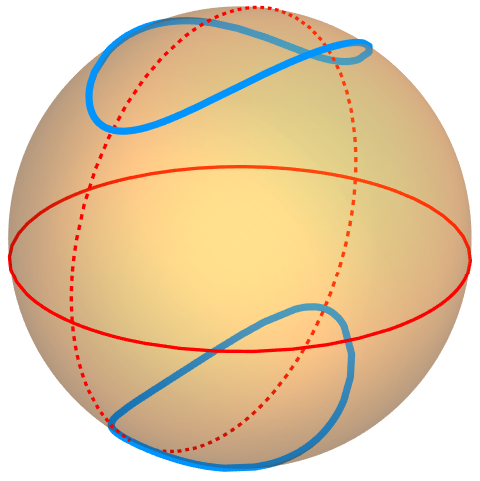} % Ruta y tamaño
		\end{subfigure}\hspace{1.3cm}
		\begin{subfigure}[t]{0.35\textwidth} % La opción [H] fija la imagen en esta posición exacta
			\centering % Centra la imagen
			\includegraphics[width=1\textwidth]{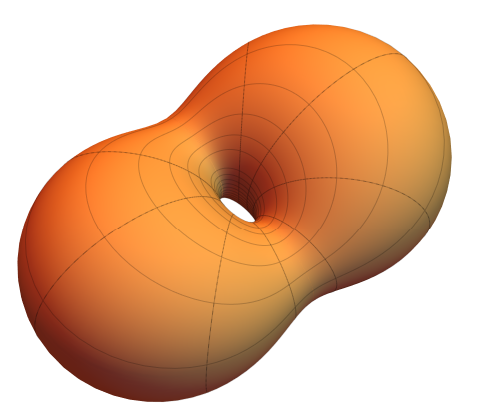} % Ruta y tamaño
		\end{subfigure}
		\caption{Generating curve and stereographic projection of a one-piece spherical hyperboloid of revolution.}  % Texto bajo la imagen
		\label{fig:hiperboloideunapieza}
	\end{figure}
	\begin{figure}[h!]
		\centering % Centra la imagen
		\begin{subfigure}[t]{0.3\textwidth} % La opción [H] fija la imagen en esta posición exacta
			\centering % Centra la imagen
			\includegraphics[width=0.99\textwidth]{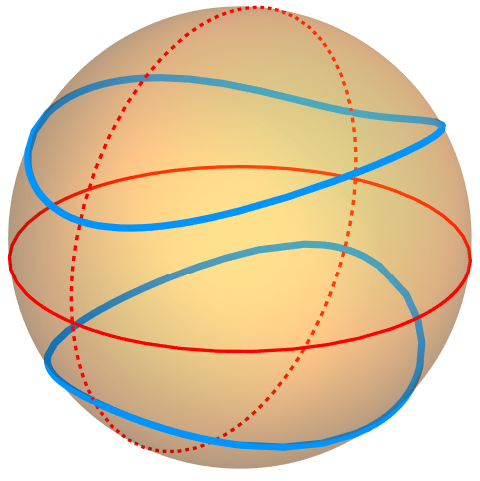} % Ruta y tamaño
		\end{subfigure}\hspace{1.3cm}
		\begin{subfigure}[t]{0.35\textwidth} % La opción [H] fija la imagen en esta posición exacta
			\centering % Centra la imagen
			\includegraphics[width=1\textwidth]{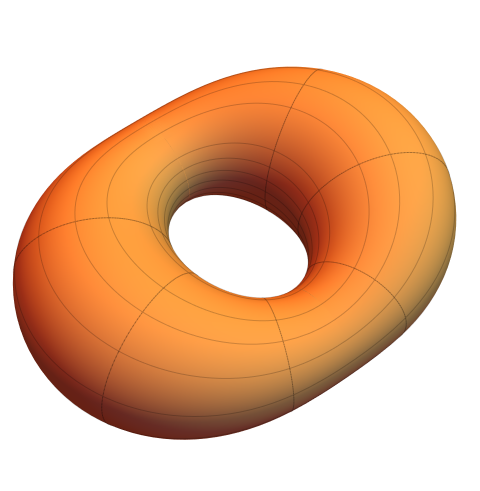} % Ruta y tamaño
		\end{subfigure}
		\caption{Generating curve and stereographic projection of a spherical genus one paraboloid of revolution.}  % Texto bajo la imagen
		\label{fig:paraboloideunapieza}
	\end{figure}
	
	\item[(IIa)] \textit{Two-piece spherical hyperboloids of revolution}: $ S_{\mathcal{CV}_{AB}^{II}} $, with  $B <1<A$, or $ S_{\mathcal{CH}_{CD}^{II}} $, i.e.\ $D<C<1$,  whose implicit equation is 
	$ x_3^2+x_4^2=-\alpha^2+\beta^2\, x_2^2$,
	%$$ S_{\mathcal{CV}_{AB}^{II}}\equiv x_3^2+x_4^2=-\alpha^2+\beta^2\, x_2^2, $$
	where 
	$$\alpha^2=A^2-1=\frac{D^2(1-C^2)}{C^2-D^2}, \ \beta^2=\frac{A^2-B^2}{B^2}=\frac{D^2}{C^2-D^2}.$$
	
	\noindent In particular, two copies of a \textit{spherical genus zero paraboloid of revolution} appear when  $A^2+B^2=2A^2B^2$ or $C^2=1/2>D^2$. In this case, $\beta^2 = 2 \alpha^2$. See Figures  \ref{fig:hiperboloidedospiezas} and \ref{fig:paraboloidedospiezas}.
	
		\item[(IIb)] \textit{Prolate spherical ellipsoids of revolution}: $ S_{\mathcal{CV}_{AB}^{II}} $, with $A <1<B$, whose implicit equation is $ x_3^2+x_4^2=\alpha^2-\beta^2\, x_2^2 $,
	%$$ S_{\mathcal{CV}_{AB}^{II}}\equiv x_3^2+x_4^2=\alpha^2-\beta^2\, x_2^2, $$
	where 
	$$\alpha^2=1-A^2, \ \beta^2=\frac{B^2-A^2}{B^2}.$$
	 It is clear that, up to a translation along the axis of revolution,  they coincide with the hyperboloids described in (IIa). See Figure \ref{fig:hiperboloidedospiezas}.
	 
	 \begin{figure}[h!]	
	 	\centering % Centra la imagen
	 	\begin{subfigure}[t]{0.3\textwidth} % La opción [H] fija la imagen en esta posición exacta
	 		\centering % Centra la imagen
	 		\includegraphics[width=0.99\textwidth]{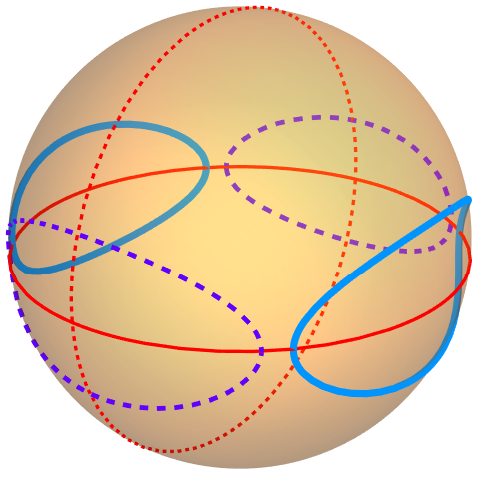} % Ruta y tamaño
	 	\end{subfigure}\hspace{1.3cm}
	 	\begin{subfigure}[t]{0.4\textwidth} % La opción [H] fija la imagen en esta posición exacta
	 		\centering % Centra la imagen
	 		\includegraphics[width=1\textwidth]{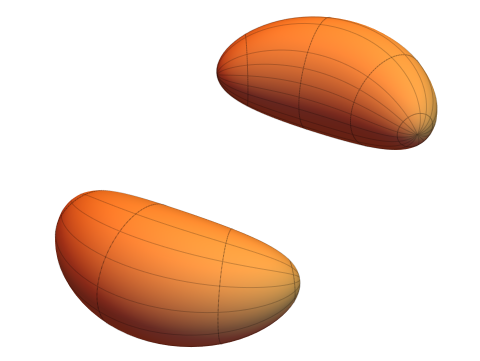} % Ruta y tamaño
	 	\end{subfigure}
	 	\caption{Generating curves and stereographic projection of a two-piece spherical hyperboloid of revolution or (two copies of) a prolate spherical ellipsoid of revolution.}  % Texto bajo la imagen
	 	\label{fig:hiperboloidedospiezas} % Etiqueta para referenciar la figura en el texto
	 \end{figure}
	 \begin{figure}[h!]
	 	\centering % Centra la imagen
	 	\begin{subfigure}[t]{0.3\textwidth} % La opción [H] fija la imagen en esta posición exacta
	 		\centering % Centra la imagen
	 		\includegraphics[width=0.99\textwidth]{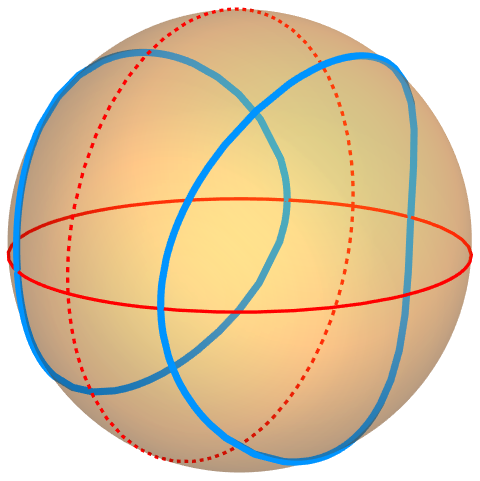} % Ruta y tamaño
	 	\end{subfigure}\hspace{1.3cm}
	 	\begin{subfigure}[t]{0.35\textwidth} % La opción [H] fija la imagen en esta posición exacta
	 		\centering % Centra la imagen
	 		\includegraphics[width=1\textwidth]{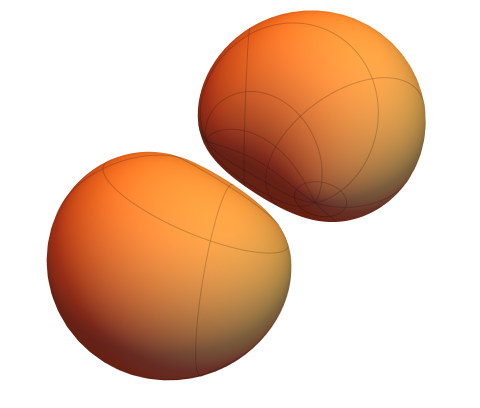} % Ruta y tamaño
	 	\end{subfigure}
	 	\caption{Generating curve and stereographic projection of (two copies of) a spherical genus zero paraboloid of revolution.}  % Texto bajo la imagen
	 	\label{fig:paraboloidedospiezas} % Etiqueta para referenciar la figura en el texto
	 \end{figure}

	\item[(III)] \textit{Oblate spherical ellipsoids of revolution}:  $S_{ \mathcal{CH}_{CD}^{III}}$, i.e.\ $C<1<D$, whose implicit equation is $x_3^2+x_4^2=\alpha^2-\beta^2\, x_2^2 $
	%$$ S_{ \mathcal{CH}_{CD}^{III}}\equiv x_3^2+x_4^2=\alpha^2-\beta^2\, x_2^2, $$
	where 
	$$\alpha^2=\frac{D^2(1-C^2)}{D^2-C^2}, \ \beta^2=\frac{D^2}{D^2-C^2}.$$ See Figure \ref{fig:elipsoide}.
	
		\begin{figure}[h!]
		\centering % Centra la imagen
		\begin{subfigure}[t]{0.3\textwidth} % La opción [H] fija la imagen en esta posición exacta
			\centering % Centra la imagen
			\includegraphics[width=0.99\textwidth]{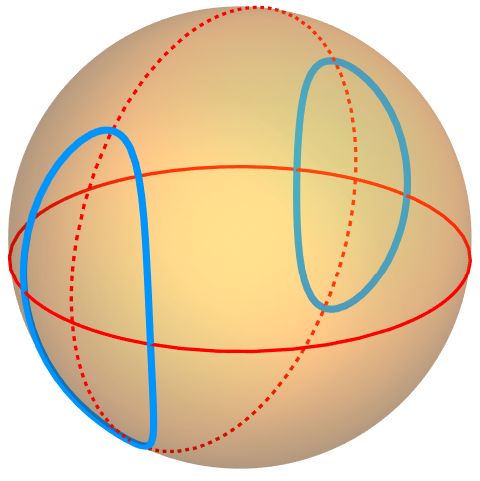} % Ruta y tamaño
		\end{subfigure}\hspace{1.3cm}
		\begin{subfigure}[t]{0.4\textwidth} % La opción [H] fija la imagen en esta posición exacta
			\centering % Centra la imagen
			\includegraphics[width=1\textwidth]{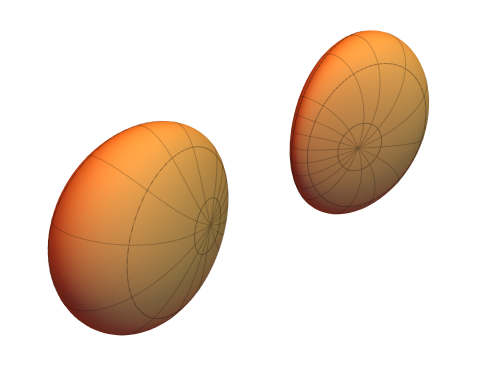} % Ruta y tamaño
		\end{subfigure}
		\caption{Generating curve and stereographic projection of (two copies of) a oblate spherical ellipsoid of revolution.}  % Texto bajo la imagen
		\label{fig:elipsoide} % Etiqueta para referenciar la figura en el texto
	\end{figure}
\end{enumerate}
\end{definition}

In view of the implicit equations that appear in Definition \ref{def:quadric}, the question naturally arises as to what surface it represents $x_3^2+x_4^2=2a x_2$, $a\neq 0$, which a priori could be thought of as appropriate for a spherical paraboloid of revolution. We will answer this question in the Proposition \ref{propo:fakeparaboloids}.

\begin{remark}\label{rm:impleqnsx1}
	The implicit equations of the non-degenerate quadric surfaces of revolution introduced in parts (I), (IIa) and (III) of Definition \ref{def:quadric} can be directly written in a common way as follows:
	\begin{equation}\label{eq:impl_eqns_x1}
		x_3^2+x_4^2=\widehat \alpha^2-\widehat \beta^2\, x_1^2, 
	\end{equation}
	where $\widehat \alpha = D$ and $ \widehat \beta = D/C$, and so $\widehat \alpha / \widehat \beta =C$. Then $\widehat \alpha , \widehat \beta < 1$, $\widehat \alpha / \widehat \beta > 1$ in (I), 
	$\widehat \alpha , \widehat \beta < 1$, $\widehat \alpha / \widehat \beta < 1$ in (IIa), 
	and $\widehat \alpha , \widehat \beta > 1$, $\widehat \alpha / \widehat \beta < 1$ in (III). The one for (IIb) is written as
	$	x_3^2+x_4^2=\widehat \beta^2\, x_1^2 -\widehat \alpha^2 $,
	with $\widehat \alpha^2= B^2-1$ and $\widehat \beta^2	=B^2/A^2-1 $, $A<1<B$, and so $\widehat \alpha / \widehat \beta < 1$.
	
	\noindent We emphasize that \eqref{eq:impl_eqns_x1} is also the equation of the \textit{degenerate} rotational quadric surfaces in $\s^3$. Specifically, $\widehat \alpha^2	=R_\delta^2 \leq 1$, $\widehat \beta^2= 1$, $\delta \geq 0$ for the totally umbilical spheres (see Example \ref{ex:tot_umb_S3}), and $\widehat \alpha^2	=\sin^2 \varphi_0$, $\widehat \beta^2= 0$, $ \varphi_0 \in (0,\pi/2)$, for the standard tori (see Example \ref{ex:standard_tori}), and $\widehat \alpha^2= \widehat \beta^2= \sin^2 \theta$, $ \theta \in (0,\pi)$, for the spherical moons (see Example \ref{ex:lun_esf_S3}).
\end{remark}

\begin{remark}\label{rm:Darboux}
	It is known that stereographic projections of standard tori (Example \ref{ex:standard_tori}) and spherical moons (Example \ref{ex:lun_esf_S3}) are examples of Dupin cyclides (see e.g.\ \cite{F93}). It is interesting to note that when projecting the non-degenerate quadric surfaces of revolution stereographically from the North Pole $(0,0,0,1)$ to the $xyz$-plane, we find the quartics 
	$\lambda(x^2+y^2+z^2)^2+L(x^2+y^2+z^2)+Q(x,z)=0$,  where $\lambda=C^2(D^2-1)$, $L=2C^2(D^2+1)>0$ and $Q(x,z)=C^2(D^2-1)-4D^2 x^2-4C^2 z^2$.
	They are specific non-degenerate \textit{Darboux cyclides} \cite{Z19} (i.e.\  stereographic projection of the intersection surface of a sphere and a quadric in $\R^4$). The circular arc structure of Darboux cyclides attracts the attention of geometric modeling community in applying them to contemporary free form architecture
	(see e.g.\ \cite{P12} and references therein).
\end{remark}

\begin{remark}\label{rm:Kquadric}
Using Corollary \ref{cor:Kkey}, Proposition \ref{Prop:momentoangularesfericoABCD} and Remark \ref{rm:casos0},	the non-degenerate quadric surfaces of revolution introduced in Definition \ref{def:quadric} are uniquely determined by the spherical angular momentum 
$
\mathcal K(z)=\dfrac{\pm z}{\sqrt{\mu+c z^2}}, 
$
$\mu\neq 0$, $c\neq 0$, under the following conditions:
\begin{enumerate}
	\item[(I)] $\mu<0$, $\mu+c>1$ and $\left(\mu-c+1\right)^2+4c\mu>0$ determine the one-piece spherical hyperboloids of revolution.
	If, in addition, $2\mu+c=2$, $c>4$, the spherical genus one paraboloids of revolution are fixed.
\item[(II)]  $\mu>0$ and  $c>0$ determine the two-piece spherical hyperboloids of revolution and the prolate spherical ellipsoids of revolution
If, in addition, $\mu+c=1$, the spherical genus zero paraboloids of revolution are fixed. 
\item[(III)] $\mu>0$ and $c<0$ determine the oblate spherical ellipsoids of revolution.
\end{enumerate}
\end{remark}

Now we show that the non-degenerate quadric surfaces of revolution in $\s^3$ introduced in Definition \ref{def:quadric} satisfy the same cubic Weingarten relation as the quadric surfaces of revolution in Euclidean 3-space (see \cite{CC22,CC23}) or Lorentz-Minkowski space (see \cite[Section 7.1]{CCC26}).

\begin{proposition}\label{prop:Kquadric}
	Any non-degenerate quadric surface of revolution in $\s^3$ satisfies the cubic Weingarten relation
	$k_{\rm m}=\mu \, k_{\rm p}^3$, for some constant $\mu \neq 0$.
\end{proposition}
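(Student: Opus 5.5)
The plan is to work entirely through the spherical angular momentum and formula \eqref{eq:kmkp}, which expresses $k_{\rm m}=\mathcal K'(z)$ and $k_{\rm p}=\mathcal K(z)/z$ for a rotational surface whose generating curve has non-constant distance $z$ to the axis $\xi_0$. By Definition \ref{def:quadric} and Remark \ref{rm:Kquadric}, every non-degenerate quadric surface of revolution is $S_\xi$ for a non-degenerate spherical conic $\xi$ in canonical position. Its coordinate $z$ is non-constant, since the conic is neither a parallel nor a great circle. By Proposition \ref{Prop:momentoangularesfericoABCD}, its spherical angular momentum satisfies $\mathcal K(z)^2 = z^2/(\mu+cz^2)$ with $\mu\neq 0$, $c\neq 0$. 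So, locally on each arc where the sign is fixed, $\mathcal K(z)=\varepsilon z(\mu+cz^2)^{-1/2}$ with $\varepsilon=\pm1$.

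The main step is a direct differentiation. First,
$$k_{\rm p}=\frac{\mathcal K(z)}{z}=\frac{\varepsilon}{\sqrt{\mu+cz^2}}, \qquad k_{\rm p}^3=\frac{\varepsilon}{(\mu+cz^2)^{3/2}}.$$
Second,
$$k_{\rm m}=\mathcal K'(z)=\varepsilon\left(\frac{1}{(\mu+cz^2)^{1/2}}-\frac{cz^2}{(\mu+cz^2)^{3/2}}\right)=\frac{\varepsilon\,\mu}{(\mu+cz^2)^{3/2}}.$$
Comparing the two gives $k_{\rm m}=\mu\,k_{\rm p}^3$, with the same constant $\mu\neq0$ coming from \eqref{eq:mucAB} or \eqref{eq:mucCD}. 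A convenient way to organize the computation, which avoids the sign entirely, is to differentiate the identity $\mathcal K^2(\mu+cz^2)=z^2$ implicitly. This gives $2\mathcal K\mathcal K'(\mu+cz^2)+2cz\mathcal K^2=2z$, and substituting $\mu+cz^2=z^2/\mathcal K^2$ yields $\mathcal K'=\mu\mathcal K^3/z^3$, which is exactly $k_{\rm m}=\mu k_{\rm p}^3$.

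The only delicate point, which I expect to be the main (minor) obstacle, is justifying that the computation is valid on the whole surface. Two issues arise. First, $\mathcal K$ is only defined up to sign, but the relation is invariant under the simultaneous change $\mathcal K\mapsto-\mathcal K$, since both $k_{\rm m}$ and $k_{\rm p}$ change sign together. Second, formula \eqref{eq:kmkp} requires $z\neq 0$ and $z$ to be a local parameter. At points where $\dot z=0$ I will argue by continuity: both sides of the relation are smooth functions on the surface, via \eqref{eq:kmkp_1}, and they agree on the dense open set where $z$ is a local coordinate. Points with $z=0$ lie on the axis, where $S_\xi$ is not regular in the rotational parametrization. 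Finally, I would note that the argument covers types (I), (II) and (III), including the paraboloid subcases, uniformly. The only role of $\mu\neq0$ is to distinguish these surfaces from the degenerate cases of Remark \ref{rm:casos0}, where $\mu=0$ gives the moons with $k_{\rm m}=0$.
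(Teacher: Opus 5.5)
Your proof is correct and is essentially the paper's argument. You insert $\mathcal K(z)=\pm z/\sqrt{\mu+cz^2}$ (Proposition~\ref{Prop:momentoangularesfericoABCD}, Remark~\ref{rm:Kquadric}) into \eqref{eq:kmkp} and differentiate to get $k_{\rm m}=\pm\mu(\mu+cz^2)^{-3/2}=\mu\,k_{\rm p}^3$; your implicit-differentiation variant and the continuity remarks about the sign of $\mathcal K$ and the points where $\dot z=0$ are sound details the paper leaves implicit.
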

\begin{proof} Using Remark \ref{rm:Kquadric}, we put 
$\K(z)=\pm \dfrac{z}{\sqrt{c z^2+\mu}}$, $\mu\neq 0$, $c\neq 0$, in \eqref{eq:kmkp} and so obtain 
\begin{equation*}
	k_m=\K'(z)=\pm \dfrac{\mu}{\sqrt{\left(cz^2+\mu\right)^3}}, \quad k_p=\dfrac{\K(z)}{z}=\pm \dfrac{1}{\sqrt{cz^2+\mu}},
\end{equation*}
Then it is clear that 	$k_{\rm m}=\mu \, k_{\rm p}^3$.
\end{proof}

We finish this section answering the question posed above about the \textit{natural} candidates for rotational  spherical paraboloids.

\begin{proposition}\label{propo:fakeparaboloids}
	The rotational surfaces in $\s^3$ given by the implicit equation $x_3^2+x_4^2=2a x_2$, $a \neq 0$, are generated by the sphero-cylindrical curves $x^2+(y+a)^2=1+a^2$ (see Figure \ref{fig:fake}). They satisfy the following sixth-degree Weingarten relation:
\begin{equation}\label{eq:K6th}
	(k_m-2k_p^3-3 k_p)^2=\frac{4}{1+a^2}(1+k_p^2)^3.
\end{equation}
\end{proposition}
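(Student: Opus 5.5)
To find the generating curve, I intersect the implicit equation with the totally geodesic sphere $x_4=0$. Writing $x=x_1$, $y=x_2$, $z=x_3$, the curve $\xi$ is cut out by $z^2=2ay$ and $x^2+y^2+z^2=1$. Eliminating $z^2$ gives $x^2+y^2+2ay=1$, that is, $x^2+(y+a)^2=1+a^2$. This is a vertical cylinder whose axis is displaced from the $z$-axis, which is why it is not one of the $\mathcal{CV}_{AB}$. Conversely, rotating this curve about $\xi_0$ by \eqref{eq:paramXrot} gives $x_3^2+x_4^2=z^2=2ax_2$, which proves the first claim.

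For the Weingarten relation I will use Corollary \ref{cor:edoW} and \eqref{eq:kmkp}, so the task is to compute $\mathcal K$ as a function of $z$.
\begin{enumerate}
\item Put $r=\sqrt{1+a^2}$ and parametrize $x=r\cos t$, $y=-a+r\sin t$, with $z^2=2ay$.
\item In the parameter $t$, the numerator of \eqref{eq:spherical momentum} is $x'y-xy'=ar\sin t-r^2=a(y+a)-r^2=ay-1$.
\item The speed satisfies $|\xi'|^2=r^2+z'^2$, and $z'=ay'/z=ar\cos t/z$. Hence $|\xi'|^2=r^2+a^2x^2/z^2$, with $x^2=1-y^2-z^2$.
\item Substituting $y=z^2/(2a)$ yields
\[
\mathcal K(z)^2=\frac{(z^2/2-1)^2\,z^2}{r^2z^2+a^2\bigl(1-z^2-z^4/(4a^2)\bigr)}.
\]
The denominator simplifies to $a^2+z^2-z^4/4=r^2-(1-z^2/2)^2$. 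So with $w:=1-z^2/2$,
\[
\mathcal K^2=\frac{w^2z^2}{r^2-w^2},\qquad k_p^2=\frac{\mathcal K^2}{z^2}=\frac{w^2}{r^2-w^2}.
\]
\end{enumerate}

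The relation now comes from eliminating $z$. From the formula for $k_p$ we get $w=k_p\,r/\sqrt{1+k_p^2}$ up to sign, hence $1+k_p^2=r^2/(r^2-w^2)$. I will then differentiate $\mathcal K=wz/\sqrt{r^2-w^2}$ in $z$, using $dw/dz=-z$, to obtain $k_m=\mathcal K'(z)$. Rewriting $z^2=2(1-w)$ and using $r^2-w^2=r^2/(1+k_p^2)$, I expect $k_m$ to take the form (polynomial in $k_p$) $+$ (term proportional to $(1+k_p^2)^{3/2}/r$). Isolating the radical and squaring should then give exactly \eqref{eq:K6th}, with $4/(1+a^2)=4/r^2$.

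The main obstacle is this elimination. The derivative produces terms in $z^2=2(1-w)$ that must reorganize into $2k_p^3+3k_p$. I would carry it out by first writing $k_m=\frac{d}{dz}(k_p z)=k_p+z\,\frac{dk_p}{dz}$. Then $\frac{dk_p}{dz}=\frac{dk_p}{dw}\cdot(-z)$, and $\frac{dk_p}{dw}=r^2/(r^2-w^2)^{3/2}$. This gives $k_m=k_p-2(1-w)\,r^2(r^2-w^2)^{-3/2}$, whose pieces $k_p$, $w\,r^2(r^2-w^2)^{-3/2}$ and $r^2(r^2-w^2)^{-3/2}$ are all expressible in $k_p$ and $r$. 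Specifically, $w\,r^2(r^2-w^2)^{-3/2}=k_p(1+k_p^2)$ and $r^2(r^2-w^2)^{-3/2}=(1+k_p^2)^{3/2}/r$. Hence
\[
k_m-k_p-2k_p(1+k_p^2)=-\frac{2}{r}(1+k_p^2)^{3/2},
\]
and squaring yields \eqref{eq:K6th}. Signs of $\mathcal K$ and of $w$ only change this by the sign ambiguity that squaring removes.
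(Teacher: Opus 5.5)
Your proof is correct and follows the paper's route: you get the generating curve by eliminating $z^2$, then compute $\mathcal K(z)$, and then eliminate $z$ using \eqref{eq:kmkp}. Your $\mathcal K^2=w^2z^2/(r^2-w^2)$ agrees with the paper's $\mathcal K(z)=z(z^2-2)/\sqrt{4(a^2+z^2)-z^4}$. The only differences are your angular parametrization of the circle and the substitution $w=1-z^2/2$, which carries out explicitly the ``long straightforward computation'' that the paper omits.
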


	\begin{figure}[h!]
	\centering % Centra la imagen
	\begin{subfigure}[t]{0.3\textwidth} % La opción [H] fija la imagen en esta posición exacta
		\centering % Centra la imagen
		\includegraphics[width=0.99\textwidth]{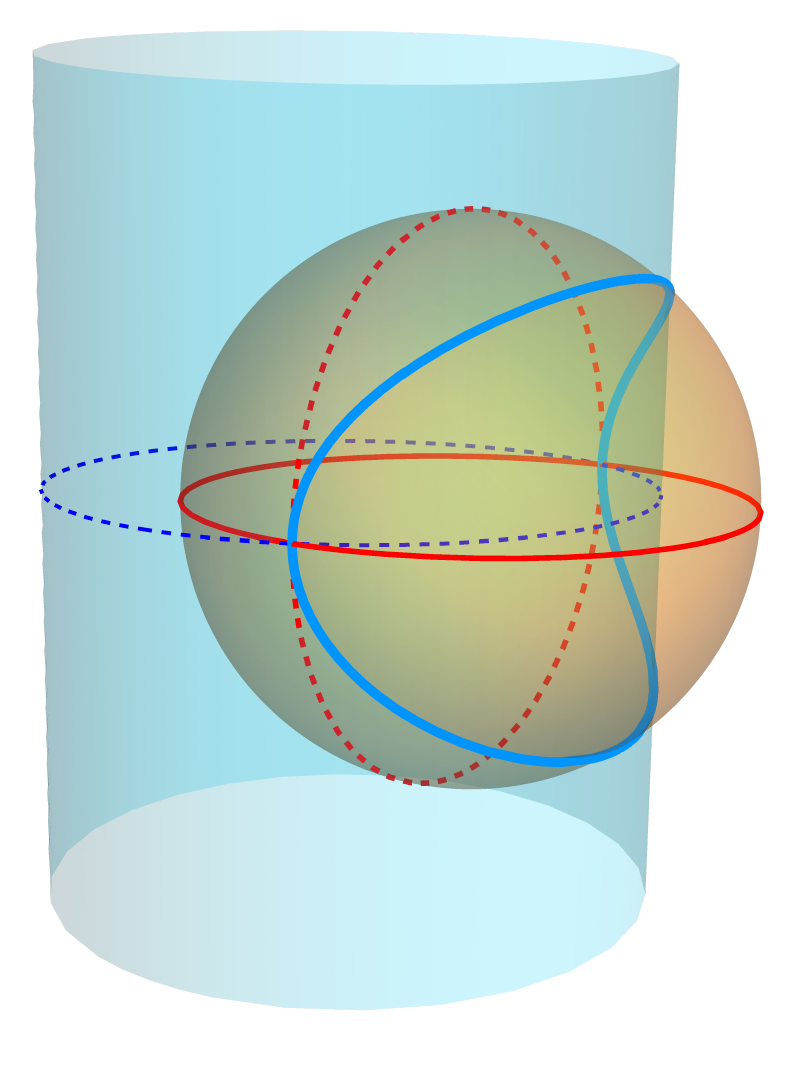} % Ruta y tamaño
	\end{subfigure}\hspace{1.3cm}
	\begin{subfigure}[t]{0.4\textwidth} % La opción [H] fija la imagen en esta posición exacta
		\centering % Centra la imagen
		\includegraphics[width=0.8\textwidth]{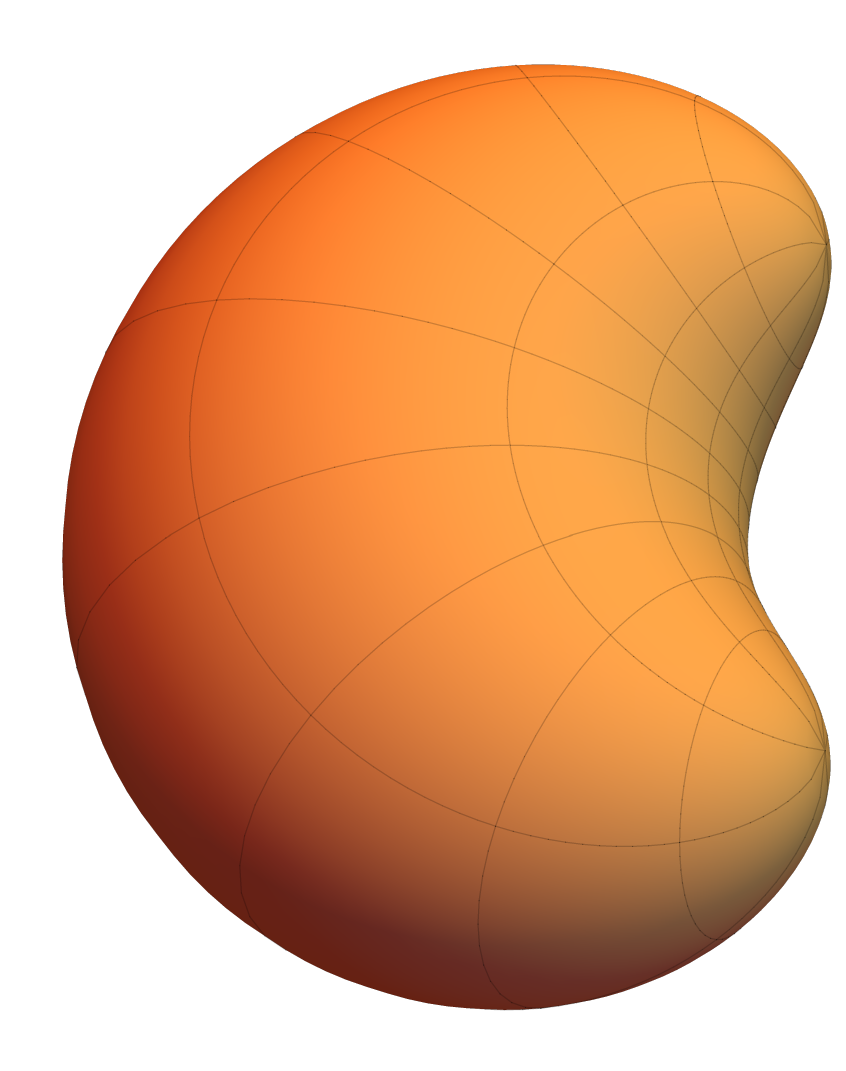} % Ruta y tamaño
	\end{subfigure}
	\caption{Generating curve and stereographic projection of the rotational surface $x_3^2+x_4^2=2a x_2$, $a \neq 0$.}  % Texto bajo la imagen
	\label{fig:fake} % Etiqueta para referenciar la figura en el texto
\end{figure}

\begin{proof}
Using \eqref{eq:paramXrot}, the generating curve $\xi_a$ of the rotational surface $x_3^2+x_4^2=2a x_2$, $a \neq 0$, is the intersection of $\s^2$ with the parabolic cylinder $z^2=2ay$. Taking into account that $x^2+y^2+z^2=1$, we get that $\xi_a \equiv x^2+(y+                                                                                                                                                                                                                                                                                                                                                                                                                                                                                                          a)^2=1+a^2$. We parameterize this curve by 
$$
x=\pm \frac{1}{2a}\sqrt{4a^2-4a^2 t^2-t^4}, \ y=\frac{t^2}{2a}, \ z=t, \quad t\in (-1,1), \, 4a^2-4a^2 t^2-t^4 \geq 0.
$$
From \eqref{eq:spherical momentum}, it is not difficult to reach that its spherical angular momentum is given by
\begin{equation}
\mathcal K (z)= \frac{z (z^2-2)}{\sqrt{4(a^2+z^2)-z^4}}.
\end{equation}
Then \eqref{eq:kmkp} leads to \eqref{eq:K6th}, after a long straightforward computation.
\end{proof}

%\begin{remark}\label{rm:Pinkall}
%	According to \cite{Pi85}, there is some?? relationship between certain?? rotational surfaces in $\s^3$ and \textit{Dupin cyclides} (see e.g.\ \cite{F93})).  
%\end{remark}

%\vspace{0.2cm}

%%%%%%%%%%%%%%%%%%%%%%%%%%%%%%%%%%%%%%%%%%%%%%%%%%%%%%%%%%%%%%%%%%%%%%%%%%%%%%%%%%%%%%%%%%%%

\section{Cubic rotational Weingarten surfaces in $\s^3$} \label{Sect:CubicW}

In the final section of the article, we establish our main result that characterizes degenerate and non-degenerate quadric surfaces of revolution  as the only rotational surfaces in $\s^3$  verifying a specific cubic Weingarten relation. % contained in Proposition \ref{prop:Kquadric}.

\begin{theorem}\label{Th:Main}
	The only rotational surfaces in the 3-sphere $\s^3$ satisfying the cubic Weingarten relation $k_{\rm m}=\mu \, k_{\rm p}^3$, for some $\mu\in\R$, are the following:
	\begin{enumerate}
		\item[(i)]  totally geodesic equatorial sphere (Example \ref{ex:tot_geod_S3}),
		\item[(ii)]  totally umbilical spheres (Example \ref{ex:tot_umb_S3}),
		\item[(iii)]  standard tori (Example \ref{ex:standard_tori}),
		\item[(iv)]  spherical moons (Example \ref{ex:lun_esf_S3}), 
		\item[(v)]  non-degenerate quadric surfaces of revolution (Definition \ref{def:quadric}).
%		\item Hiperboloide esférico de una pieza.
%		\item Hiperboloide esférico de dos piezas.
%		\item Elipsoide esférico.
	\end{enumerate}
\end{theorem}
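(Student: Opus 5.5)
First I dispose of the case of constant $z$. Then the profile curve is a parallel $\xi_{\varphi_0}$ and the surface is a standard torus. By Example \ref{ex:standard_tori} it has $k_{\rm m}=\tan\varphi_0$ and $k_{\rm p}=-\cot\varphi_0$, so $k_{\rm m}=\mu k_{\rm p}^3$ holds with $\mu=-\tan^4\varphi_0$. This gives (iii).

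From now on $z$ is non-constant. By Corollary \ref{cor:edoW} and \eqref{eq:kmkp}, the relation $k_{\rm m}=\mu k_{\rm p}^3$ becomes the first-order ODE
\[
\mathcal K'(z)=\mu\,\frac{\mathcal K(z)^3}{z^3}
\]
for the spherical angular momentum. I will solve it explicitly and then identify each solution via Corollary \ref{cor:Kkey}, which says $\mathcal K(z)$ determines $S_\xi$ up to translations along $\xi_0$.

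\textbf{Solving the ODE.} If $\mu=0$, then $\mathcal K$ is constant. By Remark \ref{rm:Ksimple}, this gives the totally geodesic sphere ($\mathcal K\equiv0$) or the spherical moons, i.e.\ cases (i) and (iv).

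If $\mu\neq0$, the solution $\mathcal K\equiv 0$ again gives (i). Where $\mathcal K\neq0$ I separate variables as $d\mathcal K/\mathcal K^3=\mu\,dz/z^3$, which integrates to
\[
\frac{1}{\mathcal K^2}=\frac{\mu}{z^2}+c,
\qquad\text{that is,}\qquad
\mathcal K(z)^2=\frac{z^2}{\mu+cz^2},\quad c\in\R .
\]
This is exactly the form \eqref{eq:K2muc}. Since the solution is nowhere zero, a nontrivial solution cannot touch $\mathcal K=0$ on an interval, so the cases do not mix on connected pieces; I will note this uniqueness point without belabouring it. Moreover $\mathcal K=\pm z/\sqrt{\mu+cz^2}$ has constant sign by continuity.

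\textbf{Identifying the solutions.} If $c=0$, then $\mathcal K=\pm z/\sqrt{\mu}$ is linear in $z$, so $\mu>0$. Remark \ref{rm:Ksimple} and Remark \ref{rm:casos0} identify these as the totally umbilical spheres, case (ii).

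If $c\neq0$ and $\mu\neq0$, I must show that every pair $(\mu,c)$ for which a genuine curve exists lies in one of the regions of Remark \ref{rm:Kquadric}. Then Corollary \ref{cor:Kkey} identifies the surface as a non-degenerate quadric of revolution, case (v). This is the main obstacle. The regions in Proposition \ref{Prop:momentoangularesfericoABCD} do not obviously exhaust the $\mu c$-plane, for instance $\mu<0$ with $c$ small.

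The natural constraint is the existence condition from Remark \ref{rm:algorithm}: the set where $\mu+cz^2>0$ and $\mathcal K^2+z^2<1$ must be a nonempty interval. Writing $u=z^2$, the second condition becomes
\[
u+\frac{u}{\mu+cu}<1 .
\]
With $\mu+cu>0$ this is equivalent to
\[
c\,u^2+(\mu-c+1)\,u-\mu<0 ,
\]
a quadratic whose discriminant is exactly $(\mu-c+1)^2+4c\mu$. I will run through the sign cases:
\begin{itemize}
\item If $\mu>0$, there is no extra restriction: $c>0$ and $c<0$ are types II and III.
\item If $\mu<0$, I need $c>0$ so that $\mu+cu>0$ for some $u\in(0,1)$. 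I also need the quadratic to be negative somewhere in $(-\mu/c,1)$. This should force a positive discriminant, and also $\mu+c>1$, since otherwise the admissible interval is empty. That recovers the type I conditions.
\end{itemize}
I expect this case-checking to be the delicate computation. It is also where the parametrizations $\mathcal{CV}_{AB}$ and $\mathcal{CH}_{CD}$ must be shown to realize every admissible $(\mu,c)$, by inverting \eqref{eq:mucAB} and \eqref{eq:mucCD} for $A,B$ or $C,D$ in each region.
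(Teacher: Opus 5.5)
Your proposal follows the paper's proof essentially step for step: standard tori for constant $z$, the separable equation $\mathcal K'=\mu\,\mathcal K^3/z^3$ with $\mu=0$ giving (i) and (iv), $\mathcal K\equiv 0$ giving (i), $\mathcal K^2=z^2/(\mu+cz^2)$ with $c=0$ giving (ii), and then the constraint $\mathcal K^2+z^2<1$ to cut out the admissible $(\mu,c)$, followed by inversion of \eqref{eq:mucAB} and \eqref{eq:mucCD}. The two computations you leave as expectations are exactly what the paper carries out, and they do go through: for $\mu<0$ the paper reads $\mu+(c-1)z^2>0$ off $\mathcal K^2<1$ to get $c>1$ and $\mu+c>1$ before using the discriminant of $-cz^4+(c-\mu-1)z^2+\mu$, and it inverts via $A^2+B^2=(\mu+c+1)/c$, $A^2B^2=1/c$ in types I--II and via the root \eqref{eq:D-} of $cX^2+(\mu-c+1)X-\mu$ in type III, where $\mathcal{CH}_{CD}$ is forced because $c(A,B)=1/(A^2B^2)>0$.
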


\begin{remark}\label{rm:datosfinales}
We point out some facts about the possible values of constant $\mu $ in the cubic Weingarten relation $k_{\rm m}=\mu \, k_{\rm p}^3$ at each latter case:
\begin{enumerate}
	\item[(i)]  any $\mu \in \R$, since $k_{\rm m}=0= k_{\rm p}$ for the totally geodesic equatorial sphere $\s^2 \hookrightarrow \s^3$;
	\item[(ii)]  $\mu >0$ ($\mu=1/\sinh^2 \delta$), since $k_{\text m}=-\sinh \delta=	k_{\text p}$ for the totally umbilical sphere $\s^2(R_\delta) \hookrightarrow\s^3$, $0<R_\delta = \sech \delta < 1$, $\delta > 0$;
	\item[(iii)] $\mu <0$ ($\mu=-\tan^4 \varphi_0$), since $k_{\text m}=\tan \varphi_0 $ and $ k_{\text p}=-\cot \varphi_0 $ for the standard torus $\s^1(\cos\varphi_0)\times \s^1\left( \sin \varphi_0 \right) \hookrightarrow\s^3$, $\varphi_0 \in (0,\pi/2)$; 
	\item[(iv)]  $\mu =0$, since $k_{\rm m}=0$ for any spherical moon $S_{\mu_\theta}$, $\theta \in (0,\pi)$;
	\item[(v)]  $\mu \neq 0$, for the non-degenerate quadric surfaces of revolution (see Remark  \ref{rm:Kquadric}).
	%		\item Hiperboloide esférico de una pieza.
	%		\item Hiperboloide esférico de dos piezas.
	%		\item Elipsoide esférico.
\end{enumerate}
We remark that, in cases (ii) and (iii), the value of $\mu$ completely determines the corresponding spherical surface. In case (v), in addition to $\mu\neq 0$, another real non-null constant $c$ (see the proof of Theorem \ref{Th:Main}) and formulas \eqref{eq:IsolAB}, \eqref{eq:muDC}, \eqref{eq:D+} and \eqref{eq:D-} deduced there are involved.
\end{remark}

\begin{proof}
	Using Remark \ref{rm:datosfinales} and Proposition \ref{prop:Kquadric}, all the surfaces in the statement of Theorem \ref{Th:Main} satisfy the cubic relation $k_{\rm m}=\mu \, k_{\rm p}^3$, for some $\mu\in\R$. We now prove that they are the only ones.
	
	Assume then that a rotational surface in $\s^3$ verifies $k_{\rm m}=\mu \, k_{\rm p}^3$, $\mu\in\R$. Before applying Corollary \ref{cor:edoW}, we must consider the case that $z$ is constant, which leads to (iii) by Example \ref{ex:standard_tori}.
	 Otherwise, using \eqref{eq:kmkp}, the above cubic Weingarten relation translates into the separable o.d.e. 
\begin{equation}\label{eq:odeK}
 \mathcal K' = \mu \, \mathcal K^3 /z^3. 
\end{equation}
	If $\mu =0$, we have that $\mathcal K $ is constant and we arrive at (i) and (iv) by Remark \ref{rm:Ksimple}. When $\mu \neq 0$, the constant solution $\mathcal K \equiv 0$ of \eqref{eq:odeK} gives (i)  again. The general solution of \eqref{eq:odeK} is written as 
\begin{equation}\label{eq:Kth}
	 \K(z)=\pm \dfrac{z}{\sqrt{\mu + c z^2}}, 
\end{equation}
	where $c\in \R$ is the integration constant. If $c=0$, we obtain (ii) from Remark \ref{rm:Ksimple} once again. 
	
	Now we deduce the a priori conditions on $\mu\neq 0$ and $c\neq 0$ in \eqref{eq:Kth}. % in order to get (v). 
	If $\mu>0$, then $c>0$ or $c<0$. But when $\mu <0$, we can find several new restrictions:
	It is clear from \eqref{eq:Kth} that if $\mu <0$ then $c>0$. %, and if $c <0$ then $\mu >0$.
	Using Remark \ref{rm:algorithm}, we know that not only 
	$\mathcal K(z)^2 <1 $ but also 
	\begin{equation}\label{eq:Kineq}
		\mathcal K(z)^2 +z^2 <1.
	\end{equation}
		Then we deduce that $\mu + (c-1)z^2 > 0$. So if $\mu <0$ we have that $c>1$. In addition, using that $z^2\leq 1$, we can conclude that $\mu + c > 1$ if $\mu <0$. Moreover, from \eqref{eq:Kineq} also using that $\mu + c z^2>0$, we get that $ -cz^4+(c-\mu-1)z^2+\mu>0$. When $\mu <0$ (and consequently $c>1$), the last inequality necessarily implies that $\left(\mu-c+1\right)^2+4c\mu>0$. In conclusion, when $\mu\neq 0$ and $c\neq 0$, we have arrived at the same restrictions on them as those set out in Remark \ref{rm:Kquadric}.
		
To arrive at (v), given $\mu\neq 0$ and $c\neq 0$, we need to find suitable $A>0$ and $B>0$ (or $C>0$ and $D>0$) such that the equations in \eqref{eq:mucAB} (or in \eqref{eq:mucCD}) are satisfied. We have two possible paths:

\begin{enumerate}
\item 	
On the one hand, using \eqref{eq:mucAB}, we can write $\mu = \frac{S-P-1}{P} $, where $S=A^2+B^2$ and $P=A^2B^2$, and so
\begin{equation}\label{eq:SP}
	S=\frac{\mu + c+ 1}{c}, \quad P=\frac{1}{c} .
\end{equation}
Then we know that $A^2$ and $B^2$ must be positive roots of $X^2-SX+P=0$. We have that $S^2-4P=\left( (\mu-c+1)^2+4c\mu\right)/c^2>0$. Then we can take
\begin{equation}\label{eq:IsolAB}
	A^2=\frac{\mu+c+1+ \sqrt{(\mu-c+1)^2+4c\mu}}{2c}, \quad B^2=\frac{\mu+c+1- \sqrt{(\mu-c+1)^2+4c\mu}}{2c}, 
\end{equation}
distinguishing between the following cases:
\begin{enumerate}
	\item[(I)] $\mu<0$, $\mu+c>1$ and $\left(\mu-c+1\right)^2+4c\mu>0$: Then it is easy to check that 
$1> A^2 > B^2 >0$ since $\mu < 0$ and $c>0$.
	Therefore \eqref{eq:IsolAB} leads to the one-piece spherical hyperboloids of revolution (see Definition \ref{def:quadric}(I)).
	We observe that when $2\mu+c=2$, $c>4$, then $A^2=1/2$ from \eqref{eq:IsolAB}.
	
	\item[(II)]  $\mu>0$ and  $c>0$: Then it is easy to check that 
	$0< B^2 < 1 < A^2$.	
Then \eqref{eq:IsolAB}	 leads to the two-piece spherical hyperboloids of revolution (see Definition \ref{def:quadric}(IIa)).
Looking at \eqref{eq:SP}, we observe that when $\mu+c=1$, then $A^2+B^2=2A^2B^2$. 
To get the prolate spherical ellipsoids of revolution of Definition \ref{def:quadric}(IIb), we must simply interchange the definitions of $A^2$ and $B^2$ in \eqref{eq:IsolAB}.
\end{enumerate}		

\item On the other hand, from  \eqref{eq:mucCD}, we obtain that
\begin{equation}\label{eq:muDC}
\mu+D^2c=\frac{D^2}{1-D^2}, \quad C^2= \frac{D^2}{1-c(1-D^2)^2}.
\end{equation}
The first equation of \eqref{eq:muDC} is equivalent to 
$cD^4 +(\mu-c+1)D^2-\mu =0$. Thus $D^2$ must be a positive root of the second order polynomial 
$cX^2 +(\mu-c+1)X-\mu$. Then we deduce that
\begin{equation}\label{eq:D+}
D^2	=\frac{c-\mu-1 + \sqrt{(\mu-c+1)^2+4c\mu}}{2c}, {\rm \ if \ } \mu < 0, \, c>0 {\rm \ or \ } \mu > 0, \, c>0, 
\end{equation}
and
\begin{equation}\label{eq:D-}
D^2	=\frac{c-\mu-1 - \sqrt{(\mu-c+1)^2+4c\mu}}{2c}, {\rm \ if \ }   \mu >0, \, c<0.
\end{equation}
The corresponding value of $C^2$ is given by the second equation of \eqref{eq:muDC} putting $D^2$ of \eqref{eq:D+} or \eqref{eq:D-} as appropriate.

Then we must distinguish between the following cases:
\begin{enumerate}
	\item[(I)] $\mu<0$, $\mu+c>1$ and $\left(\mu-c+1\right)^2+4c\mu>0$: It is not difficult to check that 
	$0< D^2 < 1 < C^2 $.
	Therefore \eqref{eq:muDC} and \eqref{eq:D+} lead to the one-piece spherical hyperboloids of revolution, see Definition \ref{def:quadric}(I).
	We point out that when $2\mu+c=2$, $c>4$, then $C^2+D^2=2C^2 D^2$  from \eqref{eq:muDC}.
	
	\item[(II)]  $\mu>0$ and  $c>0$: Now we arrive at $0< D^2 < C^2 < 1 $.
	Then \eqref{eq:muDC} and \eqref{eq:D+}	produce the two-piece spherical hyperboloids of revolution, see Definition \ref{def:quadric}(IIa). Moreover, it is an exercise to prove $C^2=1/2$ when $\mu+c=1$.
	
	\item[(III)] $\mu>0$ and $c<0$: Here we get that $0< C^2 < 1 < D^2$. Then \eqref{eq:muDC} and \eqref{eq:D-} provide the oblate spherical ellipsoids of revolution, see Definition \ref{def:quadric}(III).
\end{enumerate}		
\end{enumerate}		
\end{proof}

\begin{remark}\label{rm:integral}
	Once the spherical angular momentum \eqref{eq:Kth} has been determined in the proof of Theorem \ref{Th:Main}, we could have tried to apply the algorithm described in Remark \ref{rm:algorithm} to recover the corresponding generatrix curve. The main obstacle to this procedure is that elliptic integrals appear when trying to obtain the arc parameter $s$ in terms of $z$. However, by eliminating $s$ and using the change of variable $z=\sin \varphi $, we would arrive at the following equation in geographic coordinates:
	\begin{equation}\label{eq:coorgeo1}
		d\lambda = \pm \frac{\tan \varphi \, d\varphi}{\sqrt{\cos^2 \varphi (\mu + c \, \sin^2 \varphi )-\sin^2 \varphi}}
	\end{equation}
	On the other hand, using \eqref{eq:cilindroshoriz}, it is easy to check that the equation in geographical coordinates for the conics $\mathcal{CH}_{CD}$  is given by
	\begin{equation}\label{eq:coorgeo2}
		d\lambda = \pm \frac{C(1-D^2)\tan \varphi \, d\varphi}{\sqrt{D^2-\sin^2 \varphi}\sqrt{C^2 \sin^2 \varphi +D^2 \cos^2 \varphi -C^2 D^2}}
	\end{equation}
Now, it is not difficult to verify that \eqref{eq:coorgeo1} and \eqref{eq:coorgeo2} are compatible precisely through \eqref{eq:mucCD}.
\end{remark}

An immediate consequence of Theorem \ref{Th:Main} and Remark \ref{rm:datosfinales} is the following result, that can be considered as the spherical version of \cite[Theorem 2]{CC22} in Euclidean space $\E^3$ or \cite[Theorem 7.3]{CCC26} in Lorentz-Minkowski space $\L^3$.

\begin{corollary}\label{cor:caracter}
	The totally umbilical spheres, the standard tori  and the non-degenerate quadric surfaces of revolution are the only rotational surfaces in $\s^3$ satisfying the cubic Weingarten relation $k_{\rm m}=\mu \, k_{\rm p}^3$, $\mu \neq 0$.
\end{corollary}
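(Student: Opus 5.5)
The corollary follows by restricting Theorem \ref{Th:Main} to $\mu\neq 0$ and checking, case by case, which of the five families listed there can actually occur. The information needed for this is in Remark \ref{rm:datosfinales}.

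\textbf{Surfaces with $\mu\neq 0$ are covered.} Let $S_\xi$ be a rotational surface in $\s^3$ with $k_{\rm m}=\mu\,k_{\rm p}^3$ for some $\mu\neq 0$. Then Theorem \ref{Th:Main} places $S_\xi$ in one of the families (i)--(v).

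\textbf{Discarding family (iv).} A spherical moon has $k_{\rm m}=0$. By Example \ref{ex:lun_esf_S3} and \eqref{eq:kmkp}, its $k_{\rm p}=\mathcal K/z$ is nonzero wherever $z\neq 0$, since $\mathcal K\equiv -\cos\theta\neq 0$. So the relation can only hold with $\mu=0$, which is excluded.

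\textbf{Discarding family (i).} The totally geodesic equatorial sphere has $k_{\rm m}=k_{\rm p}=0$, so it satisfies the relation for every $\mu$, including $\mu\neq 0$. The statement lists only "totally umbilical spheres", so I will read that family as $\s^2(R_\delta)$ with $\delta\geq 0$, as in Example \ref{ex:tot_umb_S3}. With this reading, $\delta=0$ recovers the totally geodesic sphere and (i) is absorbed into (ii).

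\textbf{Checking (ii), (iii) and (v) occur.}
\begin{itemize}
\item[(ii)] The umbilical spheres with $\delta>0$ satisfy the relation with $\mu=1/\sinh^2\delta>0$, as computed in Remark \ref{rm:datosfinales}.
\item[(iii)] The standard tori satisfy it with $\mu=-\tan^4\varphi_0<0$, again by Remark \ref{rm:datosfinales}.
\item[(v)] The non-degenerate quadrics of revolution satisfy it with $\mu\neq 0$ by Proposition \ref{prop:Kquadric}.
\end{itemize}
These three families therefore satisfy the relation and conversely exhaust all surfaces satisfying it with $\mu\neq 0$.

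The only delicate point is the bookkeeping for the totally geodesic sphere. Everything else is direct substitution of the values recorded in Remark \ref{rm:datosfinales}.
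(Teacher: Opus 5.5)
Your proposal is correct and is exactly the paper's argument. The paper obtains the corollary as an immediate consequence of Theorem \ref{Th:Main} restricted to $\mu\neq 0$, using the values of $\mu$ recorded in Remark \ref{rm:datosfinales}, which excludes the spherical moons; your absorption of the totally geodesic sphere into the totally umbilical family matches the paper's convention in Example \ref{ex:tot_umb_S3}, where $\delta\geq 0$ and $\delta=0$ gives the equatorial sphere.
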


\vspace{0.5cm}

%%%%%%%%%%%%%%%%%%%%%%%%%%%%%%%%%%%%%%%%%%%%%%%%%%%%%%%%%%%%%%%%%%%%%%%%%%%%%%%%%%%%%%%%%%%%

\medskip

\noindent\textbf{Acknowledgments.} 
I.\ Castro is partially supported by the State Research Agency (AEI) via the grant no.\ PID2022-142559NB-I00, and the “Maria de Maeztu” Unit of Excellence IMAG, reference CEX2020001105-M, funded by MICIU/AEI/10.13039/501100011033 and ERDF/EU, Spain.

%%%%%%%%%%%%%%%%%%%%%%%%%%%%%%%%%%%%%%%%%%%%%%%%%%%%%%%%%%%%%%%%%%%%%%%%%%%%%%%%%%%%%%%%%%%%

%$$
%\frac{x^2}{a^2}+\frac{y^2}{b^2}=1
%$$
%
%$$
%\frac{x^2}{a^2}-\frac{y^2}{b^2}=1
%$$
%
%$$
%\frac{y^2}{a^2}-\frac{x^2}{b^2}=1
%$$
%
%$$
%y=4a x^2
%$$

%\newpage


\begin{thebibliography}{1}\bibliographystyle{alpha}

\bibitem{AL15} B.~Andrews and H.~Li.
{\em Embedded constant mean curvature tori in the three-sphere}.
J.\ Diff.\ Geom.\  {\bf 99} (2015), 169--189.

%\bibitem{BG94} 
%B.~van-Brunt and K.~Grant,
%{\it Hyperbolic Weingarten surfaces},
%Math.\ Proc.\ Cambridge Philos.\ Soc.\, \textbf{116} (1994), 489--504.


\bibitem{Br13a}  S.~Brendle.
{\em Embedded minimal tori in $\s^3$ and the Lawson Conjecture}.
Acta Math.\  {\bf 211} (2013), 177--190.

\bibitem{Br13b}  S.~Brendle.
{\em Minimal surfaces in $\s^3$: a survey of recent results}.
Bull.\ Math.\ Sci.\ {\bf 3} (2013), 133–-171.

\bibitem{Br13c}  S.~Brendle.
{\em Alexandrov immersed minimal tori in $\s^3$}.
Math.\ Res.\ Lett.\  {\bf 20} (2013), 459–-464.

\bibitem{Br14}  S.~Brendle.
{\em Embedded Weingarten tori in $\s^3$}.
Adv.\ Math.\   {\bf 257} (2014), 462–-475.

%\bibitem{BF71} P.F.~Byrd and M.D.~Friedman,
%{\em Handbook of elliptic integrals for engineers and physicists},
%Springer Verlag, Berlin, 1971.

\bibitem{CC22} P.~Carretero and I.~Castro. 
{\em A new approach to rotational Weingarten surfaces.} 
Mathematics {\bf 2022} 10(4), 578; https://doi.org/10.3390/math10040578

\bibitem{CC23} P.~Carretero and I.~Castro. 
{\em A Geometric Characterization of The Quadric Surfaces of Revolution.} 
Rom.\ J.\ Math.\ Comput.\ Sci.\ {\bf 15} (2023), 68--74. 

%\bibitem{CC24} P.~Carretero and I.~Castro. 
%{\em Rotational surfaces with prescribed curvatures.} 
%Diff.\ Geom.\ Appl.\ {\bf 101}, 2025, 102298; https://doi.org/10.1016/j.difgeo.2025.102298

%\bibitem{CCC25} P.~Carretero, I.~Castro and I.~Castro-Infantes. {\em Quadratic Rotational Weingarten Surfaces.} 
%Rom.\ J.\ Math.\ Comput.\ Sci.\ {\bf 13} (2025), 76--82. 

\bibitem{CCC26} P.~Carretero, I.~Castro and I.~Castro-Infantes. 
{\em Rotational Weingarten surfaces in Lorentz-Minkowski space.} 
Preprint arXiv:2512.10423 [math.DG].

%\bibitem{CCI16} I.~Castro and I.~Castro-Infantes.
%{\em Plane curves with curvature depending on distance to a line.}
%Diff.\ Geom.\ Appl.\ {\bf 44} (2016), 77--97.

%\bibitem{CCI18} I.~Castro, I.~Castro-Infantes and J.~Castro-Infantes.
%{\em Curves in Lorentz-Minkowski plane: elasticae, catenaries and grim-reapers}.
%Open Math.\ \textbf{16} (2018), 747--766.
%%
%\bibitem{CCI20} I.~Castro, I.~Castro-Infantes and J.~Castro-Infantes.
% {\em Curves in Lorentz-Minkowski plane with curvature depending on their position.}
%Open Math.\ \textbf{18} (2020), 749--770.

%\bibitem{CCIs20} I.~Castro, I.~Castro-Infantes and J.~Castro-Infantes.
%{\em On a Problem of David Singer about Prescribing Curvature for Curves.}
%Geom.\ Integrability \& Quantization {\bf 21} (2020), 100--117.

\bibitem{CCIs23} I.~Castro, I.~Castro-Infantes and J.~Castro-Infantes.
{\em Spherical Curves Whose Curvature Depends on Distance to a Great Circle.}
New Trends in Geometric Analysis. RSME Springer Series, vol 10. Springer, Cham. https://doi.org/10.1007/978-3-031-39916-9-2

\bibitem{CCIs24} I.~Castro, I.~Castro-Infantes and J.~Castro-Infantes.
{\em Helicoidal minimal surfaces in the 3-sphere: an approach via spherical curves.}
Rev.\ Real Acad.\ Cienc.\ Exactas Fis.\ Nat.\ Ser.\ A-Mat.\  {\bf 118}, 77 (2024). https://doi.org/10.1007/s13398-024-01574-3

\bibitem{Cha60} M.~Chasles.
{\em Resumé d'une théorie des coniques sphériques homofocales et des surfaces du second ordre homofocales.}
J.\ Math.\ Pures Appl.\ {\bf 5} (1860), 425--454.

\bibitem{Ch45} S.S~Chern.
{\em Some new characterizations of the Euclidean sphere.}
Duke Math.\ J.\ {\bf 12} (1945), 279--290.

\bibitem{dCD83} M.P.~do Carmo and M.~Dajczer.
{\em Rotation hypersurfaces in spaces of constant curvature}.
Trans.\ Amer.\ Math.\ Soc.\  {\bf 277} (1983), 685--709.
%
%\bibitem{E44} L.~Euler.
%{\em Methodus inveniendi lineas curvas: maximi minimive proprietate gaudentessive solutio problematis isoperimetrici latissimo sensu accepti (in Latin)}. 
%Opera Omnia: Series 1, Volume \textbf{24} (1744).

\bibitem{F93} R.~Ferr\'{e}ol.
{\em Encyclop\'{e}die des formes math\'{e}matiques remarquables.}
www.\ mathcurve.\ com

%\bibitem{H51} H.~Hopf.
%{\em \"{U}ber Fl\"{a}chen mit einer Relation zwischen den Hauptkr\"{ummungen}.}
%Math.\ Nachr.\ {\bf 4} (1951), 232--249.

%\bibitem{K15} W.~Kühnel.
%{\em Differential Geometry, Curves-Surfaces-Manifolds.}
%AMS, 2015.
%

%\bibitem{KS05} W.~Kühnel and M.~Steller.
%{\em On closed Weingarten surfaces.}
%Monatsh.\ Math.\ {\bf 146} (2005), 113--126.
%
%\bibitem{La69} H.B.~Lawson.
%{\em Local rigidity theorems for minimal hypersurfaces}.
%Ann.\ of Math.\  {\bf 89} (1969), 187--197.
%
%\bibitem{La70} H.B.~Lawson.
%{\em Complete minimal surfaces in $\s^3$}.
%Ann.\ of Math.\  {\bf 92} (1970), 335--374.

\bibitem{LYZ22} Y.R.~Luo, D.~Yang and X.Y.~Zhu
{\em Some characterizations of linear
	Weingarten surfaces in 3-dimensional space
	forms}.
Results Math.\  (2022) {\bf 77}:98. 
https://doi.org/10.1007/s00025-022-01633-4

\bibitem{Pr10} O.M.~Perdomo.
{\em Embedded constant mean curvature hypersurfaces on spheres}.
Asian J.\ Math.\  {\bf 14} (2010), 73--108.

\bibitem{Pr16} O.M.~Perdomo.
{\em Rotational surfaces in $\s^3$ with constant mean curvature}.
J.\ Geom.\ Anal.\  {\bf 26} (2016), 2155--2168.

%\bibitem{Pi85} U.~Pinkall.
%{\em Dupin hypersurfaces}.
%Math.\ Ann.\ {\bf 270}(3) (1985), 427--440.

\bibitem{P12} H. Pottmann, L. Shi and M. Skopenkov
{\em Darboux cyclides and webs from circles}. 
Comput.\ Aided Geom.\ Design, \textbf{29} (2012), 77--97.


\bibitem{Ri89} J.B.~Ripoll.
{\em Uniqueness of minimal rotational surfaces in $\s^3$}.
Amer.\ J.\ Math.\ {\bf 111} (1989), 537--547.

%\bibitem{Si07} 
%U.~Simon, 
%{\it Yau's problem on a characterization of rotational ellipsoids}, 
%Asian J.\ Math.\ {\bf 11} (2007), 361--372.

%\bibitem{S99} D.~Singer.
%{\em Curves whose curvature depends on distance from the origin.}
%Amer.\ Math.\ Monthly {\bf 106} (1999), 835--841.

%\bibitem{S08} D.~Singer.
%{\em Lectures on elastic curves and rods.}
%Curvature and Variational Modeling in Physics and Biophysics,
%AIP Conf.\ Proc.\ vol.\ {\bf 1002} (2008), 3--32.
%

\bibitem{T06} H.~Tranacher. 
{\em Sphärische Kegelschnitte didaktisch aufbereitet}. 
Diplomarbeit (master's thesis), Technische Universität Wien, 2006.

\bibitem{W61} J.~Weingarten. 
{\em Ueber eine Klasse auf einander abwickelbarer Fl\"{a}chen}. 
J.\ Reine Angew.\ Math.\, \textbf{59} (1861), 382--393.

\bibitem{Y82} 
S.T.\ Yau.
{\it Problem section, Seminar on Differential Geometry}.
Annals of Mathematical Studies \textbf{102}, Princeton University Press, 1982, pp. 669--706.

\bibitem{Z19} M.~Zhao, X.~Jia, C.~Tu, B.~Mourrain and W.~Wang.
{\em Enumerating the morphologies of non-degenerate Darboux cyclides}. 
Comput.\ Aided Geom.\ Design, \textbf{75} (2019), 101776.
https://doi.org/10.1016/j.cagd.2019.101776.

\end{thebibliography}
\end{document}